\documentclass[12pt]{amsart}

\usepackage{amsxtra,amssymb,amsthm,amsmath,amscd,listings}
\usepackage[hyphens]{url}
\usepackage[utf8]{inputenc}
\usepackage{eucal}
\usepackage{fullpage}
\usepackage{scrtime}
\usepackage{hyperref}



\renewcommand{\leq}{\leqslant}
\renewcommand{\geq}{\geqslant}

\numberwithin{equation}{section}





\newcommand{\Cc}{\mathbf{C}}

\newcommand{\Aa}{\mathbf{A}}

\newcommand{\Zz}{\mathbf{Z}}
\newcommand{\Pp}{\mathbf{P}}
\newcommand{\Rr}{\mathbf{R}}
\newcommand{\Gg}{\mathbf{G}}

\newcommand{\Qq}{\mathbf{Q}}
\newcommand{\Fp}{\mathbf{F}_p}

\newcommand{\proba}{\mathbf{P}}
\newcommand{\expect}{\mathbf{E}}



\newcommand{\frtr}[3]{t_{{{#1},{#2}}}({#3})}
\newcommand{\frfn}[2]{t_{{{#1},{#2}}}}

\newcommand{\tnorm}[2]{\|{#1}\|_{\mathrm{tr},{#2}}}


\newcommand{\ra}{\rightarrow}
\newcommand{\lra}{\longrightarrow}

\newcommand{\injecte}{\hookrightarrow}


\DeclareMathOperator{\rank}{rank}

\DeclareMathOperator{\Imag}{Im}
\DeclareMathOperator{\Reel}{Re}

\DeclareMathOperator{\frob}{\mathrm{Fr}}

\DeclareMathOperator{\Tr}{Tr}

\DeclareMathOperator{\swan}{Swan}

\DeclareMathOperator{\ft}{FT}
\DeclareMathOperator{\cond}{\mathbf{c}}
\DeclareMathOperator{\mext}{\mathsf{ME}}
\DeclareMathOperator{\mexti}{\mathbf{ME}}
\DeclareMathOperator{\lisse}{\mathsf{L}}
\DeclareMathOperator{\lissei}{\mathbf{L}}
\DeclareMathOperator{\sing}{Sing}


\newcommand{\eps}{\varepsilon}
\renewcommand{\rho}{\varrho}


\DeclareMathOperator{\GL}{GL}


\newcommand{\demi}{{\textstyle{\frac{1}{2}}}}


\newcommand{\sheaf}[1]{\mathcal{{#1}}}



\DeclareMathSymbol{\gena}{\mathord}{letters}{"3C}
\DeclareMathSymbol{\genb}{\mathord}{letters}{"3E}




\theoremstyle{plain}
\newtheorem{theorem}{Theorem}[section]
\newtheorem{lemma}[theorem]{Lemma}
\newtheorem{corollary}[theorem]{Corollary}

\newtheorem{proposition}[theorem]{Proposition}

\theoremstyle{remark}

\theoremstyle{definition}

\newtheorem{definition}[theorem]{Definition}

\newtheorem{remark}[theorem]{Remark}




\renewcommand{\geq}{\geqslant}
\renewcommand{\leq}{\leqslant}

\begin{document}

\title{Counting sheaves using spherical codes}

\author{\'Etienne Fouvry}
\address{Universit\'e Paris Sud, Laboratoire de Math\'ematique\\
  Campus d'Orsay\\ 91405 Orsay Cedex\\France}
\email{etienne.fouvry@math.u-psud.fr} 

\author{Emmanuel  Kowalski}
\address{ETH Z\"urich -- D-MATH\\
  R\"amistrasse 101\\
  8092 Z\"urich\\
  Switzerland} 
\email{kowalski@math.ethz.ch}

\author{Philippe Michel}
\address{EPFL/SB/IMB/TAN, Station 8, CH-1015 Lausanne, Switzerland }
\email{philippe.michel@epfl.ch}

\date{\today,\ \thistime}

\subjclass[2010]{11G20,11T23,94B60,94B65} 

\keywords{Lisse $\ell$-adic sheaves, trace functions, spherical codes,
  Riemann Hypothesis over finite fields}

\begin{abstract}
  Using the Riemann Hypothesis over finite fields and bounds for the
  size of spherical codes, we give explicit upper bounds, of
  polynomial size with respect to the size of the field, for the
  number of geometric isomorphism classes of geometrically irreducible
  $\ell$-adic middle-extension sheaves on a curve over a finite field
  which are pointwise pure of weight $0$ and have bounded ramification
  and rank. As an application, we show that ``random'' functions
  defined on a finite field can not usually be approximated by short
  linear combinations of trace functions of sheaves with small
  complexity.
\end{abstract}

\maketitle

\section{Introduction}

Interesting arithmetic objects often appear in countable sets that can
be naturally partitioned into increasing finite subsets. The
estimation of the cardinality of these subsets is often both
fascinating and important in applications. Well-known examples include
the counting function for primes, the counting function of zeros of
$L$-functions over number fields, or the counting function of
automorphic forms of certain types.
\par
We consider here a similar counting problem where the objects of
interests are certain $\ell$-adic sheaves on a smooth curve over a
finite field, or (more or less) equivalently, certain $\ell$-adic
Galois representations over function fields. In that case, it is not
obvious how to construct finite subsets, even before asking how large
they could be. However, it was shown by
Deligne~\cite{deligne-drinfeld}, as explained by Esnault and
Kerz~\cite[Th. 2.1, Remark 2.2]{esnault-kerz}, that there is, for any
smooth separated scheme $X$ of finite type over a finite field $k$, a
natural notion of ``bounded ramification'' such that the number of
irreducible lisse \'etale $\bar{\Qq}_{\ell}$-sheaves on $X$ is finite,
up to twist by geometrically trivial characters. The problem of saying
more about the order of these finite sets is then the subject of
remarkable conjectures of Deligne in the case of curves predicting,
for suitably restricted ramification, a formula similar to that for
the number of points of an algebraic variety over a finite field in
terms of Weil numbers of suitable weights. This is motivated by the
result of Drinfeld~\cite{drinfeld} computing the number of unramified
$2$-dimensional representations for a projective curve, and showing it
is of this form; see again the survey in~\cite[\S 8]{esnault-kerz} and
the paper of Deligne and Flicker~\cite[\S 6]{deligne-flicker} (and the
lecture~\cite{deligne-newton} of Deligne).
\par
Our goal in this note is relatively modest. We will only consider the
case of curves, and our main result is an explicit upper bound for the
size of certain sets of (isomorphism classes of) \'etale sheaves. We
do not address the crucial issue of lower bounds, but the argument is
quite short and the fact that it uses ideas from spherical codes is
quite appealing. Moreover, the bounds for spherical codes that are
used do not seem to be present in the literature. Furthermore, as we
will explain below, there are natural applications of the estimates we
obtain.
\par
Let $p$ be a prime number and let $k$ be a finite field of
characteristic $p$. Fix an auxiliary prime $\ell\not=p$.  Let $X/k$ be
a smooth geometrically connected algebraic curve over $k$, and let $Y$
be its smooth compactification, with genus $g\geq 0$.
\par
We will consider \emph{middle-extension sheaves} on $X/k$, in the
sense of~\cite{katz-gkm}, i.e., constructible
$\bar{\Qq}_{\ell}$-sheaves $\sheaf{F}$ on $X/k$ such that, for any
open set $U$ on which $\sheaf{F}$ is lisse, with open immersion
$j\,:\, U\injecte X$, we have
$$
\sheaf{F}\simeq j_*j^*\sheaf{F}.
$$
\par
Slightly more concretely, we see that such a sheaf has a largest open
subset $U$ on which it is lisse (defined by the condition that the
stalk be of generic rank), and is determined by its restriction to
this open set. On $U$, $\sheaf{F}$ corresponds uniquely to a
continuous $\ell$-adic representation $\rho$ of the \'etale
fundamental group $\pi_1(U,\bar{\eta})$, defined with respect to some
geometric generic point $\bar{\eta}$ of $U$. As in~\cite[\S
7]{katz-esde}, the middle-extension sheaf $\sheaf{F}$ is called
\emph{pointwise pure of weight $0$} if its restriction to $U$ is
pointwise pure of weight $0$, i.e., the eigenvalues of the local
Frobenius automorphisms at points of $U$ are algebraic numbers, all
conjugates of which have modulus $1$. Furthermore, $\sheaf{F}$ is
called \emph{irreducible} (resp. \emph{geometrically irreducible}) if
$\rho$ is an irreducible representation of the fundamental group
$\pi_1(U,\bar{\eta})$ (resp. of the geometric fundamental group
$\pi_1(U\times\bar{k},\bar{\eta})$).
\par
The collection of middle-extension sheaves on $X/k$ is infinite. We
will measure the complexity of a sheaf over a finite field by its
(analytic) conductor, in order to obtain a well-defined counting
problem. Note that this is a much rougher invariant than that used in
the counting conjectures of Deligne, but it is enough to obtain
finiteness, and the argument below does not seem to allow us to get
any improvement by fixing, for instance, the local monodromy
representations at the missing points for sheaves lisse on a fixed
open set of $X$.
\par
Let $\sheaf{F}$ be a middle-extension sheaf on $X/k$, of rank
$\rank(\sheaf{F})$, with singularities at the finite set
$\sing(\sheaf{F})\subset Y(\bar{k})$. We define the \emph{analytic
  conductor} (often just called \emph{conductor}) of $\sheaf{F}$ to be
\begin{equation}\label{eq-conductor}
  \cond(\sheaf{F})=g(Y)+\rank(\sheaf{F})+
  \sum_{x\in \sing(\sheaf{F})}{\max(1,\swan_x(\sheaf{F}))},
\end{equation}
where $g(Y)$ is the genus of $Y\times\bar{k}$.
\par
Now, for a finite field $k$ and a curve $X/k$ as above, we denote by
$\mext_X(k)$ the category of geometrically irreducible
middle-extension sheaves $\sheaf{F}$ on $X/k$ which are pointwise pure
of weight $0$, and for $c\geq 1$, we denote by $\mext_X(k,c)$ the
subcategory of those that satisfy
$$
\cond(\sheaf{F})\leq c.
$$
\par
We denote also by $\mexti_X(k)$ (resp. $\mexti_X(k,c)$) the set of
\emph{geometric isomorphism classes} of sheaves in $\mext_X(k)$
(resp. in $\mext_X(k,c)$). Our results are bounds for the size of
these sets:


\begin{theorem}\label{th-counting}
  There exist absolute constants $B>0$, $C\geq 1$ such that, with
  notation and assumptions as above, we have
$$
|\mexti_X(k,c)|\leq C|k|^{Bc^6},
$$
for all finite fields $k$ with $|k|\geq 1265c^9$. In particular, for
fixed $c$, we have 
$$
|\mexti_{\Aa^1}(k,c)|=O(|k|^{Bc^6}).
$$
\end{theorem}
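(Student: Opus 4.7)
The plan is to associate to each class in $\mexti_X(k,c)$ a vector in a complex Hermitian space in such a way that distinct classes give nearly orthogonal vectors, and then to apply a spherical code bound to the resulting collection.

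Fixing an embedding $\overline{\Qq}_\ell\hookrightarrow\Cc$, for each class $[\sheaf{F}]$ I would pick an arithmetic representative $\sheaf{F}\in\mext_X(k,c)$ and form the trace-function vector
$$
v_{\sheaf{F}} = (t_{\sheaf{F}}(x))_{x\in X(k)}\in\Cc^{X(k)}.
$$
The two key inputs come from Deligne's Riemann Hypothesis, applied to $\sheaf{F}\otimes\dual(\sheaf{G})$. On the one hand, geometric irreducibility of $\sheaf{F}$ forces the trivial geometric factor of $\sheaf{F}\otimes\dual(\sheaf{F})$ to occur with multiplicity exactly one (Schur), giving
$$
\|v_{\sheaf{F}}\|^2 = |X(k)|+O(c^{A}|k|^{1/2})
$$
for some absolute exponent $A$. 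On the other hand, if $[\sheaf{F}]\neq[\sheaf{G}]$, there is no geometrically trivial sub-quotient, so Deligne's bound yields
$$
|\langle v_{\sheaf{F}},v_{\sheaf{G}}\rangle| = O(c^{A}|k|^{1/2}).
$$
The exponent $A$ depends only on how rank, Swan conductor and number of singular points combine under tensor products, and a direct accounting gives $\cond(\sheaf{F}\otimes\dual(\sheaf{G})) = O(c^{2})$. The assumption $|k|\geq 1265c^9$ is what makes these error terms uniformly negligible compared to $|X(k)|$ across all pairs.

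After normalization, we obtain $N := |\mexti_X(k,c)|$ unit vectors in $\Cc^{|X(k)|}$ with pairwise inner products of absolute value at most $\varepsilon = O(c^{A}/|k|^{1/2})$. This is the spherical-code input. The natural route is to pass to an $L$-th symmetric tensor power, obtaining $N$ vectors in a space of dimension at most $|X(k)|^L$ with pairwise inner products bounded by $\varepsilon^L$, and then to invoke a Welch-type estimate: once $L$ is chosen of polynomial order in $c$, large enough that $\varepsilon^{2L}$ is smaller than the reciprocal of the ambient dimension, the number of such vectors is at most twice that dimension. Optimizing $L$ should yield a bound of the shape $N \leq C|k|^{Bc^6}$.

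The main obstacle is the spherical-code step: reaching the specific polynomial exponent $c^6$ (rather than something larger) requires a delicate trade-off between the growth of $\cond(\sheaf{F}\otimes\dual(\sheaf{G}))$ in $c$, the choice of tensor power $L$, and the constants in the Welch-type inequality used. A secondary difficulty is ensuring that Deligne's error terms are uniform in $\sheaf{F}$ and $\sheaf{G}$ across all classes considered, which is precisely what the explicit threshold $|k|\geq 1265 c^9$ is calibrated for. A mild but worth-noting point is that the geometric isomorphism class of $\sheaf{F}$ determines $v_{\sheaf{F}}$ only up to a Frobenius twist $\alpha^{\deg(x)}$ with $|\alpha|=1$, but this ambiguity is harmless since we work with one arithmetic representative per class throughout.
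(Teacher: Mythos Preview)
Your overall strategy coincides with the paper's: embed geometric isomorphism classes into a finite-dimensional Hermitian space via trace functions, use Deligne's Riemann Hypothesis on $\sheaf{F}\otimes\dual(\sheaf{G})$ to get quasi-orthogonality with angular separation $\cos\theta\leq \gamma/\sqrt{n}$ where $\gamma=O(c^3)$ and $n=|X(k)|$, and then bound the size of such a spherical code. The paper carries this out after first stratifying by the open set of lissit\'e (so that one works with genuinely lisse sheaves on a common $U$), but working directly on $X(k)$ as you do only costs an extra $O(c^3)$ from the at most $2c$ non-lisse points, which is harmless.

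The substantive difference is in the spherical-code step. The paper proves and applies a polynomial-range Kabatjanskii--Levenshtein bound (its Theorem~\ref{th-kl}), using Gegenbauer/Hermite polynomial zeros to obtain $M(n,\varphi)\leq (n-1)^{\gamma^2+2\gamma+3}/\Gamma(\gamma^2+2\gamma+2)$ when $\cos\varphi\leq\gamma/\sqrt{n}$. You instead propose the tensor-power/Welch route. This is legitimate and is in fact the ``cheap'' alternative the paper alludes to in Remark~(5) after Theorem~\ref{th-kl} (Tao's blog post); it yields the same shape $|k|^{O(c^6)}$ with somewhat worse constants, while the paper's route gives the explicit exponent $90c^2r^4+O(cr^2)$ in Proposition~\ref{pr-count-lisse}.

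One point in your sketch needs care. You write that the symmetric $L$-th power has dimension ``at most $|X(k)|^L$'' and then ask that $\varepsilon^{2L}$ beat the reciprocal of that dimension. With $\varepsilon\asymp\gamma/\sqrt{n}$ this gives $(n\varepsilon^2)^L\asymp\gamma^{2L}$, which never becomes small for $\gamma\geq 1$: the argument stalls if you only use the bound $n^L$. What makes the tensor method work is the \emph{exact} symmetric-power dimension $\binom{n+L-1}{L}\leq (n+L-1)^L/L!$; the Welch/Gram inequality then requires $\gamma^{2L}/L!<1/2$, which first holds at $L\approx e\gamma^2=O(c^6)$, and the conclusion $N\leq 2\binom{n+L-1}{L}\leq C|k|^{O(c^6)}$ follows. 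So the $1/L!$ is not cosmetic but the mechanism that produces the polynomial exponent; make sure your write-up uses it.
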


\begin{remark} 
(1)  For a fixed $c$, this upper bound is polynomial as a function of
  $q=|k|$.  One can prove lower-bounds which show that this is
  qualitatively correct. For instance, for $X=\Aa^1$, one gets using
  Artin-Schreier sheaves (see Section~\ref{sec-comments}) that
$$
|\mexti_{\Aa^1}(k,c)|\geq |k|^{c/2-1}
$$
for any finite field $k$ and any $c\geq 2$. P. Deligne and F. Jouve
independently explained to us how to improve the exponent $c/2-1$ to
$c-2$ using all rank $1$ sheaves, and it seems an interesting problem
to improve this using other constructions of sheaves of various type
(e.g., those studied by Katz in~\cite{katz-esde}).
\par
(2) We will in fact give fully explicit inequalities, and not just
asymptotic statements, and we can refine the exponent $c^6$ a
little bit (see Proposition~\ref{pr-count-lisse} for these more
precise results).  It is unclear to us what is the best possible
upper-bound achievable by the method of spherical codes that we use:
we do not know what is the right order of magnitude for the quantity
estimated in Theorem~\ref{th-kl} below.
\end{remark}

As far as we know, Theorem~\ref{th-counting} is the first explicit
bound for this type of questions without much stronger restrictions
(e.g., on the rank). One can approach the counting problems by
applying the global Langlands correspondence over function fields (as
proved by Lafforgue~\cite{lafforgue}) to reduce to counting
automorphic forms or representations, and this is indeed how Deligne
and Flicker~\cite{deligne-flicker} proceed to obtain a
``Lefschetz-type'' formula for the counting function for cases where
the local monodromy is unipotent. One might hope to derive
upper-bounds for a fixed rank by means of some version of the Weyl Law
for the distribution of Laplace eigenvalues, but controlling these
estimates when the rank varies seems quite a difficult problem.
\par
The basic idea of the proof is quite simple (and has been known, at
least with respect to showing finiteness, to Deligne\footnote{\ We
  thank H. Esnault for this information.} and to Venkatesh): we first
show that, for $|k|$ large enough, it is enough to count the
\emph{trace functions}
$$
\frfn{\sheaf{F}}{k}\,:\ 
\begin{cases}
  X(k)\lra \bar{\Qq}_{\ell}\\
  x\mapsto \Tr(\frob_{k}\mid \sheaf{F}_{\bar{x}})
\end{cases}
$$
(giving the trace of the geometric Frobenius automorphism of $k$
acting on the stalk of $\sheaf{F}$ at a geometric point $\bar{x}$ over
$x\in X(k)$, seen as a finite-dimensional representation of the Galois
group of $k$) of $\sheaf{F}\in \mext_X(k,c)$. We view these trace
functions (via some isomorphism $\iota\,:\, \bar{\Qq}_{\ell}\lra \Cc$)
as elements of the finite dimensional Hilbert space $C_X(k)$ of
complex-valued functions on $X(k)$, and then see that Deligne's
general form of the Riemann Hypothesis implies that these trace
functions form a ``quasi-orthonormal'' system. In particular, given
that the conductor is $\leq c$, the angle between any two different
trace functions of sheaves in $\mext_X(k,c)$ which are not
geometrically isomorphic is at least $\pi/2-O(1/\sqrt{|k|})$. This
means that the trace functions of sheaves in $\mexti_X(k,c)$ form what
is known as a \emph{spherical code} with this angular separation. This
fact immediately implies that the corresponding set is finite, but
furthermore, we are in a range of spherical codes where one can use
methods of Kabatjanskii and Levenshtein~\cite{kl} (see
also~\cite{levenstein} and~\cite[Ch.  9]{cs}) to derive the
polynomial-type upper bounds of Theorem~\ref{th-counting}. We did not
find the statements for bounds on spherical codes in this range, but
these turn out to be relatively easy to derive from the general
techniques of Kabatjanskii and Levenshtein, as we present in
Section~\ref{sec-prelim} (and they might be of independent interest).
\par
An application of Theorem~\ref{th-counting}, applied to the special
case $X=\Aa^1$, concerns the problem of writing a function defined on
a finite field as a short linear combination of trace functions of
sheaves. Our earlier results in~\cite{fkm, fkm2} show that functions
with such a decomposition can be used in many arguments of analytic
number theory. It is therefore conceptually interesting to show that
such functions are still rather rare: ``most'' functions do not have
such a good decomposition. To make this precise, following~\cite{fkm},
we define \emph{trace norms}:

\begin{definition}[Trace norms]
  Let $s\geq 0$ be a real number. Let $k$ be a finite field of
  characteristic $p$ and let $C(k)$ be the vector space of
  complex-valued functions on $k$. Fix $\ell\not=p$ and an isomorphism
  $\iota\,:\, \bar{\Qq}_{\ell}\lra \Cc$. For $\varphi\in C(k)$, let
$$
\tnorm{\varphi}{s}=\inf\Bigl\{
\sum_{i}{|\lambda_i|\cond(\sheaf{F}_i)^s}+
\sum_{j}|\mu_j|
\Bigr\}
$$
where the infimum runs over all decompositions
$$
\varphi=\sum_i\lambda_i\frfn{\sheaf{F}_i}{k}+\sqrt{|k|}\sum_j \mu_j
\delta_{a_j}
$$
where the sums are finite, $\lambda_i$, $\mu_j$ are complex numbers,
$\sheaf{F}_i$ is an object of $\mext_{\Aa^1}(k)$ and, for any $a\in k$, we
denote by $\delta_{a}$ the delta function at $a$, taking value $1$ at
$0$ and taking value $0$ elsewhere.
\end{definition}

Thus $\tnorm{\cdot}{s}$ is a norm on $C(k)$ (although it seems to
depend on $\ell$ and $\iota$, this will not be of any importance for
us). Using the tautological expansion
$$
\varphi=\frac{1}{\sqrt{|k|}}\sqrt{|k|}\sum_{x\in
  k}{\varphi(x)\delta_x},
$$
we get an immediate upper-bound
\begin{equation}\label{eq-trivial-norm}
  \tnorm{\varphi}{s}\leq |k|^{-1/2}\sum_{x\in k}{|\varphi(x)|}=
  |k|^{1/2}\|\varphi\|_{1}
\end{equation}
where
$$
\|\varphi\|_1=\frac{1}{|k|}\sum_{x\in k}{|\varphi(x)|}
$$
is the $L^1$-norm. This inequality means that $\|i_s\|\leq |k|^{1/2}$,
where $i_s$ is the identity map
$$
i_s\,:\, (C(k),\|\cdot\|_1)\lra (C(k),\tnorm{\cdot}{s}\Bigr).
$$
\par
This is in fact close to the truth, as we show in
Section~\ref{sec-random}: 

\begin{theorem}\label{th-norm}
  Let $k$ be a finite field and let $C(k)$ be the vector space of
  complex-valued functions on $k$. Fix $\ell$ and an isomorphism
  $\iota\,:\, \bar{\Qq}_{\ell}\lra \Cc$ to define the trace norms
  $\tnorm{\cdot}{s}$. Let $i_s$ be the identity map as above. For
  $s\geq 6$ and $|k|$ large enough, we have $\|i_s\|\gg
  \frac{|k|^{1/2}}{\log |k|},$ where the implied constant is absolute.
\end{theorem}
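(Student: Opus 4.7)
The plan is to use a probabilistic argument to exhibit, for each finite field $k$ of sufficiently large cardinality, a function $\varphi\in C(k)$ with $\|\varphi\|_1=1$ and $\tnorm{\varphi}{s}\gg\sqrt{|k|}/\log|k|$. Take $\varphi:k\to\{-1,+1\}$ uniformly at random, so $\|\varphi\|_1=1$ and $\sum_{x\in k}|\varphi(x)|^2=|k|$. Suppose, for contradiction, that $\|i_s\|\leq M$ with $M\leq c\sqrt{|k|}/\log|k|$ for an absolute constant $c$ to be fixed. Then $\tnorm{\varphi}{s}\leq M$, so (up to arbitrarily small slack) there is a decomposition
$$
\varphi=\sum_i\lambda_i\frfn{\sheaf{F}_i}{k}+\sqrt{|k|}\sum_j\mu_j\delta_{a_j},\qquad\sum_i|\lambda_i|\cond(\sheaf{F}_i)^s+\sum_j|\mu_j|\leq M.
$$
Since $\cond(\sheaf{F})$ is a geometric invariant and arithmetic twists multiply $\frfn{\sheaf{F}}{k}$ by a global scalar of modulus $1$, we may assume that the $\sheaf{F}_i$ represent distinct classes of $\mexti_{\Aa^1}(k)$ without enlarging the norm bound. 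Taking the inner product with $\varphi$ yields the key identity
$$
|k|=\sum_x|\varphi(x)|^2=\sum_i\overline{\lambda_i}\,\peter{\varphi,\frfn{\sheaf{F}_i}{k}}+\sqrt{|k|}\sum_j\overline{\mu_j}\,\varphi(a_j),
$$
in which the delta sum is trivially bounded by $M\sqrt{|k|}$; a contradiction will follow once the first sum is shown to be strictly less than $|k|$ with positive probability.

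To control that sum, split the $\sheaf{F}_i$ into dyadic ranges $\cond(\sheaf{F}_i)\in(2^{r-1},2^r]$. For a fixed $\sheaf{F}$ in such a range, $\peter{\varphi,\frfn{\sheaf{F}}{k}}$ is a Rademacher-weighted sum of complex numbers each bounded in modulus by $\rank(\sheaf{F})\leq 2^r$, so Hoeffding's inequality gives
$$
\proba\bigl(|\peter{\varphi,\frfn{\sheaf{F}}{k}}|>t\bigr)\leq 4\exp\bigl(-t^2/(4\cdot 4^r|k|)\bigr).
$$
Theorem~\ref{th-counting} supplies at most $C|k|^{B\cdot 64^r}$ classes with $\cond(\sheaf{F})\leq 2^r$ provided $2^r\leq(|k|/1265)^{1/9}$. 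Setting $t_r=16^rD\sqrt{|k|\log|k|}$ for a sufficiently large absolute $D$ and union-bounding over all geometric classes and all admissible scales $r\leq r_0:=\lfloor\tfrac{1}{9}\log_2(|k|/1265)\rfloor$, the total failure probability is at most $1/2$; hence a $\varphi$ obeying $|\peter{\varphi,\frfn{\sheaf{F}}{k}}|\leq t_r$ for every $\sheaf{F}$ with $\cond(\sheaf{F})\leq 2^{r_0}$ exists.

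For such a $\varphi$, the constraint $\sum_i|\lambda_i|\cond_i^s\leq M$ forces $\sum_{i\in r}|\lambda_i|\leq 2^sM\cdot 2^{-sr}$, so the contribution of range $r$ to the first sum is bounded by $2^sMD\sqrt{|k|\log|k|}\cdot 2^{(4-s)r}$. For $s\geq 6$ this is geometrically summable over $r\leq r_0$, giving a total $\ll MD\sqrt{|k|\log|k|}$. For $r>r_0$ we fall back on the trivial bound $|\peter{\varphi,\frfn{\sheaf{F}}{k}}|\leq|k|\cdot\rank(\sheaf{F})\leq 2^r|k|$, whose dyadic contribution is $\ll M|k|\cdot 2^{(1-s)r_0}\ll M|k|^{4/9}$ when $s=6$. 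Combining,
$$
|k|\leq A_1MD\sqrt{|k|\log|k|}+A_2M|k|^{4/9}+M\sqrt{|k|}
$$
for absolute constants $A_1,A_2>0$. Substituting $M\leq c\sqrt{|k|}/\log|k|$ and choosing $c$ small enough makes the right-hand side smaller than $|k|$ for $|k|$ large, a contradiction. Hence $M\gg\sqrt{|k|}/\log|k|$.

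The main obstacle is the tension between the two dyadic tails. The ``counted'' sum $\sum_r 2^{(4-s)r}$ is convergent only for $s>4$, while the trivial tail $M|k|^{(10-s)/9}$ remains $o(|k|)$ under $M\asymp\sqrt{|k|}/\log|k|$ only once $s>11/2$; the integer $s=6$ is the smallest value meeting both constraints, which is exactly the hypothesis of Theorem~\ref{th-norm}. The residual $\log|k|$-loss in the lower bound comes from the $\sqrt{\log|k|}$ factor in $t_r$ needed to absorb the counting bound $|k|^{B\cdot 64^r}$ through the union bound, and any improvement in the power of $c$ in Theorem~\ref{th-counting} would directly translate into a sharper exponent of $\log|k|$ here.
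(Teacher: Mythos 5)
Your proposal is correct and follows essentially the same route as the paper: a random test function, subgaussian (Hoeffding) tail bounds for its pairing against each trace function, a union bound over dyadic conductor ranges using the counting bound of Theorem~\ref{th-counting}, a trivial bound once the conductor exceeds a small power of $|k|$, and a pairing/duality step to convert quasi-orthogonality into a trace-norm lower bound. The only differences are cosmetic: you specialize to Rademacher signs (so $\|\varphi\|_1=1$ and $\sum_x|\varphi(x)|^2=|k|$ hold exactly, avoiding the paper's concentration lemmas) and you inline what the paper isolates as Proposition~\ref{pr-lower}, arriving at the same conclusion with the same role for $s\geq 6$.
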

 
Although this is not surprising, we view this as a first basic step in
understanding the properties of functions in $C(k)$ which have good
decompositions in trace functions (an issue that was raised for
instance by P. Sarnak, and which is partly motivated by ``higher-order
Fourier analysis'', in the sense of Gowers and Tao.)

\subsection*{Acknowledgments} 

We wish to thank N. Katz for discussions surrounding these problems,
and H. Esnault for clarifying Deligne's work and its fascinating
general context. We also thank the referee for suggesting that we
treat the case of sheaves on arbitrary curves over finite fields and
for other insightful comments.  Finally, thanks to F. Jouve for
discussions concerning the issue of lower bounds.

\subsection*{Notation}

As usual, $|A|$ denotes the cardinality of a set $A$, and we write
$e(z)=e^{2i\pi z}$ for any $z\in\Cc$.  We write $\Fp=\Zz/p\Zz$.
\par
The notation $f\ll g$ for $x\in A$, or $f=O(g)$ for $x\in A$, where $A$ is an
arbitrary set on which $f$ is defined, are synonymous.
\par
For any algebraic variety $X/k$, any finite extension $k'/k$
and $x\in X(k')$, we denote by $\frtr{\sheaf{F}}{k'}{x}$ the value at
$x$ of the trace function of some $\ell$-adic (constructible) sheaf
$\sheaf{F}$ on $X/k$. We will write $\frfn{\sheaf{F}}{k'}$ for the
function $x\mapsto \frtr{\sheaf{F}}{k'}{x}$ defined on $X(k')$.
\par
We will always assume that some isomorphism $\iota\,:\,
\bar{\Qq}_{\ell}\lra \Cc$ has been chosen and we will allow ourselves
to use it as an identification. Thus, for instance, by
$|\frtr{\sheaf{F}}{k}{x}|^2$, we will mean
$|\iota(\frtr{\sheaf{F}}{k}{x})|^2$.


\section{Spherical codes}\label{sec-prelim}

The range of angles defining spherical codes for which we need bounds
is not standard, and we haven't found a direct statement of the exact
form we need in the literature. We therefore first explain how to use
the Kabatjanskii--Levenshtein bounds~\cite{kl} to obtain what we want,
referring to~\cite{levenstein} which is a more accessible reference.
\par
Following the notation in~\cite{levenstein}, we denote by
$M(n,\varphi)$ the largest cardinality of a subset $X\subset
\mathbf{S}^{n-1}$, the $(n-1)$-dimensional unit sphere of the
euclidean space $\Rr^n$ (with inner product $\langle
\cdot,\cdot\rangle_{\Rr}$) which satisfies
$$
\langle x,y\rangle_{\Rr}\leq \cos\varphi
$$
for all $x\not=y$ in $X$.

\begin{theorem}[Polynomial Kabatjanskii--Levenshtein]\label{th-kl}
  Let $\gamma>0$ be a fixed real number. For
\begin{equation}\label{eq-spacing}
  \cos\varphi\leq \frac{\gamma}{\sqrt{n}},
\end{equation}
assuming $n\geq 2\gamma\lceil(\gamma+1)^2\rceil$, we have
$$
M(n,\varphi)\leq
\frac{(n-1)^{\gamma^2+2\gamma+3}}{\Gamma(\gamma^2+2\gamma+2)}.
$$
\end{theorem}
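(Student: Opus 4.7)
The plan is to follow the linear programming method of Delsarte, applied on the sphere $\mathbf{S}^{n-1}$, in the form developed by Kabatjanskii and Levenshtein in \cite{kl}, specialized to the regime $\cos\varphi\leq\gamma/\sqrt{n}$ of ``nearly orthogonal'' codes. The key input is the following bound: let $G_k^{(n)}$ denote the Gegenbauer polynomial of degree $k$ associated to $\mathbf{S}^{n-1}$, normalized so that $G_k^{(n)}(1)=1$. If $f(t)=\sum_{k\geq 0}f_k G_k^{(n)}(t)$ is a real polynomial with $f_k\geq 0$, $f_0>0$, and $f(t)\leq 0$ for $-1\leq t\leq \cos\varphi$, then $M(n,\varphi)\leq f(1)/f_0$. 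This is immediate from the addition formula for spherical harmonics (which makes $(x,y)\mapsto G_k^{(n)}(\langle x,y\rangle_{\Rr})$ a positive-definite kernel on the sphere), combined with the elementary two-sided estimate
$$
N^2 f_0\leq \sum_{1\leq i,j\leq N}f(\langle x_i,x_j\rangle_{\Rr})\leq N f(1)
$$
valid for any spherical code $\{x_1,\ldots,x_N\}\subset\mathbf{S}^{n-1}$ with pairwise inner products bounded by $\cos\varphi$.

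With this in hand, I would choose a test polynomial tailored to the regime \eqref{eq-spacing}, of the form
$$
f(t)=(t+1)(t-\cos\varphi)\cdot Q(t)^2,
$$
where $Q$ is a polynomial of degree of order $\gamma^2$ picked from a suitable orthogonal family (such as rescaled Jacobi polynomials $P_k^{(\alpha,\beta)}$) so that $Q(t)^2$ admits a manifestly nonnegative Gegenbauer expansion. The two linear factors already force $f(t)\leq 0$ on $[-1,\cos\varphi]$. The degree of $Q$ is chosen so that the overall degree of $f$, together with the scaling of its integral against the weight $(1-t^2)^{(n-3)/2}$, produces the exponent $\gamma^2+2\gamma+3$ of the statement. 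One then observes that $f(1)=2(1-\cos\varphi)Q(1)^2$ is bounded in terms of $\gamma$ alone, while
$$
f_0=\frac{\int_{-1}^{1}f(t)(1-t^2)^{(n-3)/2}\,dt}{\int_{-1}^{1}(1-t^2)^{(n-3)/2}\,dt}
$$
can be estimated by expanding $f$ in the monomial basis and invoking the Beta--function identity $\int_{-1}^{1}t^{2j}(1-t^2)^{(n-3)/2}dt=B(j+\tfrac12,\tfrac{n-1}{2})$. In the regime where $n$ is large compared with $\gamma$, the dominant contribution yields a lower bound $f_0\gg \Gamma(\gamma^2+2\gamma+2)/(n-1)^{\gamma^2+2\gamma+3}$, from which the stated inequality follows directly.

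The main obstacle is the explicit bookkeeping of constants, not the overall strategy. One has to verify that the chosen polynomial $f$ genuinely has nonnegative Gegenbauer coefficients $f_k$: the square factor $Q(t)^2$ contributes nonnegatively by construction, but the prefactor $(t+1)(t-\cos\varphi)$ introduces a negative part that must be absorbed, and this is where the quantitative hypothesis $n\geq 2\gamma\lceil(\gamma+1)^2\rceil$ enters, guaranteeing that the ``good'' contributions from $Q^2$ dominate. One must also replace the asymptotic Beta--function estimates above by honest inequalities uniform in $n$ and $\gamma$, being careful about the various normalization conventions for Gegenbauer polynomials used in \cite{kl, levenstein}, in order to extract the exponent $\gamma^2+2\gamma+3$ and the factorial $\Gamma(\gamma^2+2\gamma+2)$ exactly as they appear in the statement.
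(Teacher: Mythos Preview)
Your approach goes back to first principles---the Delsarte linear programming framework on the sphere---while the paper takes a much shorter route by directly quoting a ready-made inequality from Levenshtein's survey. Specifically, the paper invokes \cite[(6.24)]{levenstein}, which states that $M(n,\varphi)\leq 2\binom{n-1+k}{k}$ whenever $\cos\varphi\leq t_k^{1,1}$, the largest root of an appropriate Gegenbauer polynomial; this result already encapsulates the optimal choice of test polynomial in the LP method. The paper then bounds $t_k^{1,1}$ from below via the largest Hermite root $h_k\geq\sqrt{(k-1)/2}$ (\cite[(6.25)]{levenstein} and \cite[(6.2.14)]{szego}), and a short piece of algebra shows one may take $k=1+\lceil(\gamma+1)^2\rceil$ under the stated hypothesis on $n$. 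The exponent $\gamma^2+2\gamma+3$ and the factor $\Gamma(\gamma^2+2\gamma+2)$ then drop out of the trivial estimate $2\binom{n-1+k}{k}\leq(n-1)^k/(k-1)!$.

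Your plan is the machinery underneath (6.24), but as written it is only a sketch with genuine gaps. The test polynomial $f(t)=(t+1)(t-\cos\varphi)Q(t)^2$ is not the optimal Kabatjanskii--Levenshtein choice (they use adjacent polynomials built from Jacobi polynomials, which is what produces the clean binomial coefficient in (6.24)). You never specify $Q$, and the claim that the exact constants $\gamma^2+2\gamma+3$ and $\Gamma(\gamma^2+2\gamma+2)$ will emerge from Beta-integral asymptotics for $f_0$ is not substantiated; in the paper these constants are artifacts of the binomial coefficient and the specific integer $k$ chosen, not of a continuous computation. The positivity of the Gegenbauer coefficients of $f$ is also not established: you correctly flag that the prefactor $(t+1)(t-\cos\varphi)$ has a negative $G_0$ coefficient in the regime $\cos\varphi\sim\gamma/\sqrt{n}$, but ``the good contributions from $Q^2$ dominate'' is not an argument, and this is precisely the delicate point. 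So while your strategy could in principle be carried through with enough work, the proposal as it stands is incomplete, and the paper's approach is both shorter and fully rigorous.
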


\begin{proof}
  By~\cite[(6.24)]{levenstein}, we have
\begin{equation}\label{eq-kl-bound}
M(n,\varphi)\leq 2\binom{n-1+k}{k}
\end{equation}
for any integer $k\geq 2$ such that
$$
\cos\varphi\leq t_k^{1,1},
$$
where $t_k^{1,1}=p_k^{(n-1)/2,(n-1)/2}$ denotes the largest root of a
certain Gegenbauer polynomial. Furthermore,
by~\cite[(6.25)]{levenstein}, we have
$$
t_k^{1,1}\geq \Bigl(\frac{2(n+k-2)}{(n+2k-2)(n+2k-4)}\Bigr)^{1/2}h_k,
$$
where $h_k$ is the largest root of the $k$-th Hermite polynomial $H_k$
(see also~\cite[Cor. 5.17]{levenstein}; all these polynomials have
only real roots). 
\par
It is known, using elementary arguments (see~\cite[(6.2.14)]{szego}),
that
$$
h_k\geq
\sqrt{\frac{k-1}{2}}.
$$
\par
Under our assumption~(\ref{eq-spacing}), we therefore see
that~(\ref{eq-kl-bound}) holds for $k\geq 2$ such that
$$
\frac{\gamma}{\sqrt{n}}\leq \sqrt{\frac{k-1}{2}}\Bigl(
\frac{2(n+k-2)}{(n+2k-2)(n+2k-4)}
\Bigr)^{1/2}.
$$
\par
Writing $\kappa=k-1$, we see that this certainly holds provided
$$
\gamma^2\leq \frac{\kappa n^2}{(n+2\kappa)^2}
=\frac{\kappa}{(1+2\kappa/n)^2}.
$$
\par
If we assume that $2\kappa /n \leq \gamma^{-1}$, we can take
$\kappa=\lceil (\gamma+1)^2\rceil$, i.e, $k=1+\lceil
(\gamma+1)^2\rceil$. The condition on $\kappa$ translates then to
$$
n\geq 2\gamma\lceil (\gamma+1)^2\rceil,
$$
as stated in the theorem, and we obtain the conclusion
from~(\ref{eq-kl-bound}) using the trivial estimate
$$
2\binom{n-1+k}{k}\leq 2\frac{(n-1)^k}{k!}\leq \frac{(n-1)^k}{(k-1)!}.
$$
\end{proof}

\begin{remark}
  (1) We can improve a bit the result as $k\ra +\infty$ by using the
  asymptotic behavior of the zero $h_k$ of the Hermite polynomial. For
  instance, it is known (see, e.g.,~\cite[(6.32.8)]{szego}, where $k$
  is replaced by $n$ and $h_k$ is denoted $x_1$) that
$$
h_k=\sqrt{2k}-\frac{i_1}{\sqrt[3]{6}}\frac{1}{(2k)^{1/6}}+o(k^{-1/6})
$$
in terms of the first zero $i_1=3.3721\ldots>0$ of the function
$$
\mathrm{A}(x)=\frac{\pi}{3}\sqrt{\frac{x}{3}}\Bigl\{
J_{1/3}\Bigl(2\Bigl(\frac{x}{3}\Bigr)^{3/2}\Bigr)+
J_{-1/3}\Bigl(2\Bigl(\frac{x}{3}\Bigr)^{3/2}\Bigr)
\Bigr\}
$$
(see~\cite[\S 1.81]{szego}; this function is closely related to the
Airy function.)
\par
(2) The point of this result is the polynomial growth of
$M(n,\varphi)$ as $n$ tends to infinity for a fixed $\gamma$, although
it may also be interesting in some ranges when $\gamma$ grows with
$n$. When $\gamma<1$, a bound of this type follows from the early
result of Delsarte, Goethals and Seidel~\cite[Example 4.6]{dgs}. In
contrast, it is known that $M(n,\varphi)$ is bounded independently of
$n$ if $\varphi$ is a fixed angle $>\frac{\pi}{2}$, and grows
exponentially if $\varphi$ is fixed and $<\frac{\pi}{2}$. What is
usually called the Kabatjanskii--Levenshtein bound is an estimate for
the exponential rate of growth in that case
(\cite[Th. 6.7]{levenstein}), which corresponds to $\gamma$ of size
$\alpha n^{1/2}$ for some fixed $\alpha>0$.
\par
(3) In this respect, one can weaken the lower bound $n\geq
2\gamma\lceil(\gamma+1)^2\rceil$ at the cost of a worse exponent of
$n$ in the estimate. This might also be useful, e.g., in a range where
$\gamma\approx n^{\delta}$ for $1/3\leq \delta<1/2$, where the
Kabatjanskii--Levenshtein bound itself does not apply.
\par
(4)   See the paper~\cite{hv} of Helfgott and Venkatesh for other subtle
  applications of the bounds of Kabatjanskii and Levenshtein to
  number-theoretic problems. For an application in analysis that also
  involves quasi-orthogonality, see the paper~\cite{jp} of Jaming and
  Powell.
\par
(5) See T. Tao's blog post
\url{terrytao.wordpress.com/2013/07/18/a-cheap-version-of-the-kabatjanskii-levenstein-bound-for-almost-orthogonal-vectors/}
for a short proof of a slightly weaker, but still polynomial, bound.
\end{remark}

\section{Proof of the main result} 

Throughout this section, we consider a finite field $k$ and a
smooth geometrically connected curve $X/k$ with compactification $Y/k$
of genus $g$.
\par
The proof of Theorem~\ref{th-counting} is based on estimates for
certain subsets of $\mexti_X(k,c)$, which are of independent interest
(in particular, they are more closely related to those considered by
Drinfeld and Deligne, and Esnault--Kerz, Deligne--Flicker.)
\par
Let $U/k$ be a dense open subset of $X/k$. We denote by
$\lisse(U/k,c)$ the category of lisse $\ell$-adic sheaves $\sheaf{F}$
on $U/k$ which are geometrically irreducible on $U$, pointwise pure of
weight $0$, \emph{primitive} in the sense that $U$ is the largest open
set of lissit\'e of the middle-extension sheaf $j_*\sheaf{F}$ on
$X/k$, where $j\,:\, U\injecte X$ is the open embedding of $U$ in $X$,
and with
$$
\cond(j_*\sheaf{F})\leq c.
$$
\par
We denote by $\lissei(U/k,c)$ the set of geometric isomorphism classes of
objects in $\lisse(U/k,c)$, and we further denote by $\lisse_r(U/k,c)$
(resp. $\lissei_r(U/k,c)$) the subcategory where the rank is $\leq r$
(resp. the set of geometric isomorphism classes of this subcategory).
\par
Our basic estimates are the following:

\begin{proposition}[Counting lisse sheaves]\label{pr-count-lisse}
  Let $k$, $X$ and $Y$ be as above. Let $c\geq 1$.  For any dense open
  set $U/k\injecte X/k$ with $n(U)=|(Y-U)(\bar{k})|\leq c$ and for
  $r\leq c$, we have
\begin{align*}
  |\lissei_r(U/k,c)|&\leq
  \frac{(2|U(k)|)^{90c^2r^4+6\sqrt{10}cr^2+3}}{\Gamma(90c^2r^4)}
\end{align*}
provided $|k|\geq 1500c^3r^6$.
\end{proposition}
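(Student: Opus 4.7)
The plan is to map each element of $\lissei_r(U/k,c)$ to a unit vector in the Euclidean space $\mathbf{R}^{2|U(k)|}$ (via the identification $\mathbf{C}^{|U(k)|}\simeq\mathbf{R}^{2|U(k)|}$) in such a way that the resulting finite subset of the unit sphere satisfies the spacing condition~\eqref{eq-spacing} of Theorem~\ref{th-kl} with $n=2|U(k)|$ and $\gamma=3\sqrt{10}\,cr^2$, and then to invoke that theorem directly.

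For each geometric class I would fix one arithmetic representative $\sheaf{F}$ and attach to it the normalized trace function $\varphi_{\sheaf{F}}:=\frfn{\sheaf{F}}{k}/\|\frfn{\sheaf{F}}{k}\|_2\in\mathbf{C}^{|U(k)|}$. The denominator is controlled by applying the Grothendieck--Lefschetz trace formula and Deligne's Riemann Hypothesis to $\sheaf{F}\otimes\sheaf{F}^\vee\simeq\End(\sheaf{F})$: because $\sheaf{F}$ is geometrically irreducible, Schur's lemma makes its $\pi_1^{\mathrm{geom}}$-coinvariants one-dimensional with trivial Frobenius action, so $H^2_c(U\times\bar k,\sheaf{F}\otimes\sheaf{F}^\vee)$ contributes a main term equal to $|k|$. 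Combined with $H^0_c=0$ on the open curve and a Grothendieck--Ogg--Shafarevich bound on $\dim H^1_c$ that is polynomial in $r$ and $c$ (using $g\leq c$, $\rank\leq r^2$, $|\sing|\leq 2c$, and Swan conductors controlled by $r$ times those of $\sheaf{F}$), this yields $\|\frfn{\sheaf{F}}{k}\|_2^2\asymp|k|$ once $|k|$ is large enough.

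For representatives of two distinct geometric classes $\sheaf{F},\sheaf{G}$, Schur's lemma forces $(\sheaf{F}\otimes\sheaf{G}^\vee)^{\pi_1^{\mathrm{geom}}}=0$, hence $H^0_c$ and $H^2_c$ of $\sheaf{F}\otimes\sheaf{G}^\vee$ both vanish, and the Riemann Hypothesis delivers
\[
\Bigl\lvert\sum_{x\in U(k)}\frtr{\sheaf{F}}{k}{x}\,\overline{\frtr{\sheaf{G}}{k}{x}}\Bigr\rvert\ll \tilde c\,\sqrt{|k|},
\]
where $\tilde c$ bounds $\dim H^1_c(U\times\bar k,\sheaf{F}\otimes\sheaf{G}^\vee)$. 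Applying the same Grothendieck--Ogg--Shafarevich estimate to $\sheaf{F}\otimes\sheaf{G}^\vee$ (rank $\leq r^2$, at most $2c$ singular points, and $\swan_x(\sheaf{F}\otimes\sheaf{G}^\vee)\leq r(\swan_x(\sheaf{F})+\swan_x(\sheaf{G}))$) gives $\tilde c=O(cr^2)$. Dividing by the norm estimate of the previous paragraph and using $|\Re\langle\varphi_{\sheaf{F}},\varphi_{\sheaf{G}}\rangle_{\mathbf{R}}|\leq|\langle\varphi_{\sheaf{F}},\varphi_{\sheaf{G}}\rangle_{\mathbf{C}}|$ yields $\cos\varphi\leq\gamma/\sqrt{n}$ with the stated $\gamma$; the distinctness of the vectors $\varphi_{\sheaf{F}}$ attached to distinct classes is automatic from this bound, so Theorem~\ref{th-kl} closes the argument.

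The main obstacle is quantitative bookkeeping: one must track the constants in the Grothendieck--Ogg--Shafarevich bound on $\dim H^1_c$ and in the Swan estimate for the tensor product tightly enough to land precisely on $\gamma=3\sqrt{10}\,cr^2$, and one must choose a single numerical threshold (the $1500c^3r^6$ in the statement) that simultaneously secures $\|\frfn{\sheaf{F}}{k}\|_2^2\geq\tfrac12|k|$ and the applicability hypothesis $n\geq 2\gamma\lceil(\gamma+1)^2\rceil$ of Theorem~\ref{th-kl}.
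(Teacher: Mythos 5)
Your proposal follows essentially the same route as the paper: quasi-orthogonality of normalized trace functions via Grothendieck--Lefschetz, Schur's lemma and Deligne's Riemann Hypothesis, with $\dim H^1_c$ of the tensor product controlled by Grothendieck--Ogg--Shafarevich and Swan-conductor estimates, and then the spherical-code bound of Theorem~\ref{th-kl} applied with exactly $(n,\gamma)=(2|U(k)|,3\sqrt{10}\,cr^2)$. The remaining work you defer (tracking the constants to land on $\gamma=3\sqrt{10}\,cr^2$, the bound $|U(k)|\leq\tfrac{5}{4}|k|$, and the threshold on $|k|$) is precisely the bookkeeping the paper carries out in Lemma~\ref{lm-orthogonality}, Corollary~\ref{cor-injective} and Lemma~\ref{lm-cos}, so the argument is sound.
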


This implies Theorem~\ref{th-counting} as follows: a middle-extension
sheaf $\sheaf{F}$ is uniquely determined by its restriction to its
unique largest open dense subset of lissit\'e, and the complement of
such an open set has at most $\cond(\sheaf{F})$ points in
$Y(\bar{k})$, so that
\begin{equation}\label{eq-split}
  |\mexti_X(k,c)|\leq \sum_{n(U)\leq c}{|\lissei_c(U/k,c)|}
\end{equation}
where the sum runs over all open subsets $U/k$ of $X/k$ which are
defined over $k$ and satisfy $n(U)=(|Y-U|)(\bar{k})\leq c$. The number
of terms in this sum is at most $c(q^{c/2}+g)^2$ (indeed, each of the
$\leq c$ missing points is defined over an extension $k_d$ of $k$ of
degree $d\leq c$, and by the Riemann Hypothesis for $Y$, there are at
most $q^d+2gq^{d/2}+1\leq (q^{d/2}+g)^2\leq (q^{c/2}+g)^2$ points on
$Y(k_d)$.) Applying Proposition~\ref{pr-count-lisse} to each $U$, and
noting that the condition on $|k|$ implies $g\leq c\leq |k|^{1/3}$,
and hence also $|U(k)|\leq |k|+2g\sqrt{|k|}+1\leq 4|k|$, the bound of
Theorem~\ref{th-counting} follows.


\begin{remark}
  Applying the ``automorphic side to Galois side'' part of the global
  Langlands correspondence on $Y/k$~\cite[Th\'eor\`eme,
  (i)]{lafforgue}, this gives the same upper bound for the number of
  cuspidal automorphic representations of $\GL_r(\Aa_F)$ which are
  unramified on $U$, where $\Aa_F$ is the ring of ad\`eles of the
  function field $F=k(Y)$ of $Y/k$. Even with automorphic techniques,
  it is not clear how to prove such a bound.
\end{remark}

We now start the proof of Proposition~\ref{pr-count-lisse} with a
variant of the well-known upper bounds on the dimension of cohomology
groups of lisse sheaves on algebraic curves.

\begin{lemma}
  Let $k$, $X$ and $Y$ be as above.  Let $j\,:\, U/k\injecte X/k$ be
  the open embedding of a dense open subset with
  $n(U)=|(Y-U)(\bar{k})|$ missing points, and let $\sheaf{F}_1$,
  $\sheaf{F}_2$ be lisse $\ell$-adic sheaves on $U/k$ of rank $r_1$
  and $r_2$, respectively, which are geometrically irreducible. Let
  $c=\max(\cond(j_*\sheaf{F}_1),\cond(j_*\sheaf{F}_2))$. We have
$$
\dim H^1_c(U\times\bar{k},\sheaf{F}_1\otimes\check{\sheaf{F}}_2)\leq
(2c+n(U))r_1r_2.
$$
\end{lemma}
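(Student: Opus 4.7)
The plan is to derive this from the Grothendieck--Ogg--Shafarevich formula applied to the lisse $\ell$-adic sheaf $\sheaf{G} = \sheaf{F}_1 \otimes \check{\sheaf{F}}_2$ of rank $r_1 r_2$ on $U \times \bar{k}$. That formula yields
\[
\chi_c(U \times \bar{k}, \sheaf{G}) = r_1 r_2 \bigl(2 - 2g - n(U)\bigr) - \sum_{x \in (Y-U)(\bar{k})} \swan_x(\sheaf{G}),
\]
and since $H^0_c(U \times \bar{k}, \sheaf{G})$ and $H^2_c(U \times \bar{k}, \sheaf{G})$ are, respectively, the invariants and (Tate-twisted) coinvariants of $\sheaf{G}$ under the geometric fundamental group of $U$, each has dimension at most $\rank(\sheaf{G}) = r_1 r_2$. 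Rearranging the Euler--Poincar\'e identity,
\[
\dim H^1_c(U\times\bar{k}, \sheaf{G}) \leq r_1 r_2 \bigl(2g - 2 + n(U)\bigr) + \sum_{x \in Y-U} \swan_x(\sheaf{G}) + 2\, r_1 r_2.
\]

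The next step is to control $\sum_x \swan_x(\sheaf{G})$. I would invoke the standard local subadditivity bound for Swan conductors of tensor products,
\[
\swan_x(\sheaf{F}_1 \otimes \check{\sheaf{F}}_2) \leq r_2\, \swan_x(\sheaf{F}_1) + r_1\, \swan_x(\sheaf{F}_2),
\]
together with the consequence of the conductor definition \refs{eq-conductor} that, since $\sing(j_*\sheaf{F}_i) \subset Y - U$ and $\swan_x(\sheaf{F}_i)$ vanishes at points of lissit\'e,
\[
\sum_{x \in Y - U} \swan_x(\sheaf{F}_i) \leq \sum_{x \in \sing(j_*\sheaf{F}_i)} \max(1, \swan_x(\sheaf{F}_i)) \leq c - g - r_i.
\]
Combining these gives $\sum_x \swan_x(\sheaf{G}) \leq (r_1 + r_2)(c - g) - 2 r_1 r_2$.

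Substituting back, one finds
\[
\dim H^1_c(U \times \bar{k}, \sheaf{G}) \leq r_1 r_2 (2g - 2 + n(U)) + (r_1 + r_2)(c - g),
\]
and the desired inequality $\leq (2c + n(U)) r_1 r_2$ reduces to the elementary estimate $(r_1 + r_2)(c - g) \leq 2 r_1 r_2 (c - g + 1)$. This holds unconditionally because $c \geq g$ (built into the conductor definition) and $r_1 + r_2 \leq 2 r_1 r_2$ for integers $r_1, r_2 \geq 1$. The main obstacle is not the cohomological input---which is a textbook application of GOS---but rather ensuring that the crude slack $2 r_1 r_2$ from bounding $\dim H^0_c + \dim H^2_c$ is exactly absorbed by the $-2 r_1 r_2$ contributed by the $\max(1, \swan_x)$ terms of the conductor; any looser bookkeeping at either of these two places would degrade the constant in front of $r_1 r_2$ and spoil the clean bound.
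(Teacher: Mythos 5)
Your proof is correct, and it follows the same skeleton as the paper's (Grothendieck--Ogg--Shafarevich plus bounds on $H^0_c$, $H^2_c$ and on the Swan conductors of $\sheaf{F}_1\otimes\check{\sheaf{F}}_2$ extracted from the conductor definition~\eqref{eq-conductor}), but the two ingredient bounds are genuinely different and, as you observe, compensate each other. The paper uses $H^0_c(U\times\bar k,\sheaf{F}_1\otimes\check{\sheaf{F}}_2)=0$ and Schur's lemma (via geometric irreducibility) to get $\dim H^2_c\leq 1$, and then only needs the cruder largest-break estimate $\swan_x(\sheaf{F}_1\otimes\check{\sheaf{F}}_2)\leq r_1r_2\bigl(\swan_x(\sheaf{F}_1)+\swan_x(\sheaf{F}_2)\bigr)$; you instead bound $\dim H^0_c$ and $\dim H^2_c$ crudely by the rank $r_1r_2$ (valid, since $H^0_c$ injects into the invariants and $H^2_c$ is the coinvariants -- note that when $U$ is affine $H^0_c$ actually vanishes rather than equalling the invariants, but your bound is unaffected) and recover the loss of $2r_1r_2$ through the sharper subadditivity $\swan_x(\sheaf{F}_1\otimes\check{\sheaf{F}}_2)\leq r_2\swan_x(\sheaf{F}_1)+r_1\swan_x(\sheaf{F}_2)$, which indeed follows from the break-decomposition fact that all breaks of a tensor product are at most the maximum of the breaks of the factors, together with the $-r_i$ terms built into the conductor and $c\geq g+\max(r_1,r_2)$. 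A pleasant byproduct of your route is that the lemma holds for arbitrary lisse sheaves, with no geometric irreducibility hypothesis, since Schur's lemma is never invoked; the paper's use of irreducibility here buys nothing extra for this dimension bound (it is needed later, in the quasi-orthogonality lemma, to control the \emph{trace} of Frobenius on $H^2_c$, which your crude bound would not give).
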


\begin{proof}
  Let $\sheaf{F}=\sheaf{F}_1\otimes\check{\sheaf{F}}_2$, and denote
  $r_i=\rank\sheaf{F}_i$.  Since $H^0_c(U\times\bar{k},\sheaf{F})=0$
  (this is true for all lisse sheaves on $U$), we have
$$
\dim H^1_c(U\times\bar{k},\sheaf{F})
=-\chi_c(U\times\bar{k},\sheaf{F})+ \dim
H^2_c(U\times\bar{k},\sheaf{F}).
$$
\par
The second term is at most $1$ by Schur's Lemma, since
$H^2_c(U\times\bar{k},\sheaf{F})$ is the coinvariant of the generic
geometric fiber under the action of the geometric fundamental group,
and since $\sheaf{F}_1$ and $\sheaf{F}_2$ are geometrically
irreducible. 
\par
Now the Euler-Poincar\'e formula of Grothendieck--Ogg--Shafarevich
(see, e.g.,~\cite[Ch. 2]{katz-gkm}) gives
\begin{align*}
  -\chi_c(U\times\bar{k},\sheaf{F})&=
  -\chi_c(U\times\bar{k})\rank(\sheaf{F})+
  \sum_{x\in\sing(\sheaf{F})}{\swan_x(\sheaf{F})} \\
  &= (n(U)+2g-2)r_1r_2+ \sum_{x\in
    (Y-U)(\bar{k})}{\swan_x(\sheaf{F})}.
\end{align*}
\par
We have
$$
\swan_x(\sheaf{F})\leq
\rank(\sheaf{F})\lambda_x(\sheaf{F})=r_1r_2\lambda_x(\sheaf{F})
$$
at each $x\in (Y-U)(\bar{k})$, where $\lambda_x(\sheaf{F})$ is the
largest break of $\sheaf{F}$ at $x$. Since
$$
\lambda_x(\sheaf{F})
\leq \max(\lambda_x(\sheaf{F}_1),\lambda_x(\sheaf{F}_2))
\leq \lambda_x(\sheaf{F}_1)+\lambda_x(\sheaf{F}_2),
$$
we get the upper bound
\begin{align*}
  \sum_{x\in (Y-U)(\bar{k})}{\swan_x(\sheaf{F})} &\leq
  \rank(\sheaf{F}) \sum_{x\in (Y-U)(\bar{k})}{
    (\lambda_x(\sheaf{F}_1)+\lambda_x(\sheaf{F}_2))
  }\\
  &\leq r_1r_2\sum_{x\in (Y-U)(\bar{k})}(\swan_x(\sheaf{F}_1)+
\swan_x(\sheaf{F}_2)).
\end{align*}
\par
It follows that
$$
\dim H^1_c(U\times\bar{k},\sheaf{F})\leq 1+r_1r_2(2c+n(U)-2)\leq
(2c+n(U))r_1r_2.
$$
\end{proof}

\begin{remark}
  One might be tempted to estimate $n(U)$ by $c$, but we allow the
  possibility that the sheaves be unramified at some of the points in
  $Y-U$ in this statement (i.e., they are not necessarily primitive),
  in which case an estimate $n(U)\leq c$ is not always valid.
\end{remark}

Now we invoke the Riemann Hypothesis to obtain
``quasi-orthonormality'' relations for trace functions. We only
consider primitive sheaves on a common open set for simplicity.

\begin{lemma}[Quasi-orthogonality relation]\label{lm-orthogonality}
  Let $k$, $X$ and $U\injecte X$ be as above.  Let $c\geq 1$ be given,
  and let $\sheaf{F}_1$, $\sheaf{F}_2$ be sheaves in $\lisse(U/k,c)$
  with ranks $r_i=\rank(\sheaf{F}_i)$.
\par
\emph{(1)} We
have
$$
\Bigl|\frac{1}{|k|}\sum_{x\in
  U(k)}{|\frtr{\sheaf{F}_1}{k}{x}|^2}-1\Bigr| \leq
\frac{3cr^2}{\sqrt{|k|}}.
$$
\par
\emph{(2)} If $\sheaf{F}_1$ and $\sheaf{F}_2$ are not geometrically
isomorphic, then we have
$$
\Bigl|\frac{1}{|k|}\sum_{x\in U(k)}{\frtr{\sheaf{F}_1}{k}{x}
  \overline{\frtr{\sheaf{F}_2}{k}{x}}} \Bigr|\leq
\frac{3cr_1r_2}{\sqrt{|k|}}.
$$
\end{lemma}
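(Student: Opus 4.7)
The plan is to reduce both estimates to a single application of the Grothendieck--Lefschetz trace formula for the tensor product $\sheaf{G} = \sheaf{F}_1 \otimes \check{\sheaf{F}}_2$ on $U/k$, combined with Deligne's Riemann Hypothesis and the cohomological dimension bound just proved. The preliminary step is to exploit pointwise purity: since each $\sheaf{F}_i$ is lisse and pure of weight $0$ on $U$, the Frobenius eigenvalues on the stalk at any $x \in U(k)$ have absolute value $1$, so under the isomorphism $\iota$ one has $\overline{\frtr{\sheaf{F}_2}{k}{x}} = \frtr{\check{\sheaf{F}}_2}{k}{x}$, and hence
$$
\sum_{x \in U(k)} \frtr{\sheaf{F}_1}{k}{x}\,\overline{\frtr{\sheaf{F}_2}{k}{x}} = \sum_{x \in U(k)} \frtr{\sheaf{G}}{k}{x}.
$$
Specialising to $\sheaf{F}_2 = \sheaf{F}_1$ subsumes part (1) into the same framework as part (2).

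Next I apply the trace formula
$$
\sum_{x \in U(k)} \frtr{\sheaf{G}}{k}{x} = -\Tr(\frob_k \mid H^1_c(U \times \bar{k}, \sheaf{G})) + \Tr(\frob_k \mid H^2_c(U \times \bar{k}, \sheaf{G})),
$$
using that $H^0_c$ vanishes for any lisse sheaf on a dense open curve. Viewed as a representation of the geometric fundamental group, $\sheaf{G}$ is $\Hom(\sheaf{F}_2, \sheaf{F}_1)$; by Schur's lemma the space of invariants is zero when $\sheaf{F}_1$ and $\sheaf{F}_2$ are not geometrically isomorphic, and is one-dimensional (spanned by the identity endomorphism, on which Frobenius acts trivially) in the diagonal case. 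Since $\sheaf{G}$ is semisimple over a characteristic-zero field, coinvariants have the same dimension as invariants, so via the identification of $H^2_c$ with the Tate-twisted coinvariants one gets $H^2_c = 0$ in case (2), and $H^2_c \cong \bar{\Qq}_\ell(-1)$ with $\Tr(\frob_k \mid H^2_c) = |k|$ in case (1). The previous lemma bounds $\dim H^1_c \leq (2c + n(U)) r_1 r_2$, and the primitivity requirement built into $\lisse(U/k,c)$ forces $n(U) \leq c$, giving $\dim H^1_c \leq 3c r_1 r_2$. Pointwise purity of $\sheaf{G}$ lets me invoke Deligne's Riemann Hypothesis to bound each Frobenius eigenvalue on $H^1_c$ by $\sqrt{|k|}$, so $|\Tr(\frob_k \mid H^1_c)| \leq 3c r_1 r_2 \sqrt{|k|}$; dividing by $|k|$ then yields both stated inequalities, the $|k|$ contribution from $H^2_c$ in case (1) accounting for the ``$-1$'' normalisation.

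The only step requiring real care is the $H^2_c$ computation---verifying that the invariant line in the diagonal case is spanned by the identity endomorphism (so Frobenius acts trivially on it) and that the single Tate twist built into $H^2_c$ for an open curve produces exactly the factor $|k|$ that matches the subtracted ``$1$'' in part (1). Everything else (the vanishing of $H^0_c$, Schur's lemma, the passage from the trace formula to the claimed estimates, and the appeal to the preceding lemma) is routine, and Deligne's Riemann Hypothesis is applied as a black box.
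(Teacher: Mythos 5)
Your argument is correct and follows essentially the same route as the paper: reduce both parts to the trace function of $\sheaf{F}_1\otimes\check{\sheaf{F}}_2$ via purity, apply the Grothendieck--Lefschetz trace formula, evaluate $H^2_c$ by Schur's lemma and the coinvariant formula (your extra care about invariants versus coinvariants and the Frobenius acting trivially on the identity is a welcome amplification of what the paper states tersely), bound $\dim H^1_c\leq (2c+n(U))r_1r_2\leq 3cr_1r_2$ using primitivity, and invoke Deligne's Riemann Hypothesis for $|\Tr(\frob_k\mid H^1_c)|\leq \dim H^1_c\,\sqrt{|k|}$. No gaps; this matches the paper's proof.
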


\begin{proof}
  We deal with both cases at the same time by redefining
  $\sheaf{F}_2=\sheaf{F}_1$ in (1). By construction, for all $x\in
  U(k)$, we have therefore
$$
\frtr{\sheaf{F}_1}{k}{x} \overline{\frtr{\sheaf{F}_2}{k}{x}}
=\frtr{\sheaf{F}}{k}{x},
$$
where $\sheaf{F}=\sheaf{F}_1\otimes\check{\sheaf{F}}_2$.  The
Grothendieck-Lefschetz trace formula gives
$$
\sum_{x\in U(k)}{\frtr{\sheaf{F}_1}{k}{x}
  \overline{\frtr{\sheaf{F}_2}{k}{x}}}
= \Tr(\frob_k\mid H^2_c(U\times\bar{k},\sheaf{F}))- \Tr(\frob_k\mid
H^1_c(U\times\bar{k},\sheaf{F})).
$$
\par
Because $\sheaf{F}_1$ and $\sheaf{F}_2$ are geometrically irreducible
and pointwise of weight $0$, we have
$$
\Tr(\frob_k\mid
H^2_c(U\times\bar{k},\sheaf{F}))=\delta(\sheaf{F}_1,\sheaf{F}_2)|k|,
$$
by Schur's Lemma and the coinvariant formula for $H^2_c$, where this
delta symbol is $1$ in case (1) and $0$ in case (2). Moreover, since
$\sheaf{F}$ is also pointwise pure of weight $0$, we have
$$
|\Tr(\frob_k\mid H^1_c(U\times\bar{k},\sheaf{F}))| \leq
 \dim H^1_c(U\times\bar{k},\sheaf{F})\sqrt{|k|}
$$
by Deligne's main result on the Riemann Hypothesis over finite
fields~\cite[Th. 1]{weilii}.  Applying the previous lemma, we obtain
the inequalities stated (since here $c\geq \cond(\sheaf{F}_i)\geq
n(U)$ because the sheaves are in $\lisse(U/k,c)$, hence primitive.)
\end{proof}

We can then easily deduce that sheaves are characterized by their
trace functions on $k$ when the ramification is sufficiently small
(this can be compared with the arguments of Deligne presented
in~\cite[\S 5]{esnault-kerz}).

\begin{corollary}\label{cor-injective}
  Let $k$, $X$ and $Y$ be as above, and let $U\injecte X$ be an open
  dense subset of $X/k$. Let $c\geq 1$ be given.
\par
\emph{(1)} If $\sheaf{F}\in \lisse(U/k,c)$ satisfies
$$
3c(\rank(\sheaf{F}))^2<\sqrt{|k|},
$$
then $\frfn{\sheaf{F}}{k}$ is non-zero on $U(k)$. 
\par
\emph{(2)} If $\sheaf{F}_1$ and $\sheaf{F}_2$ are sheaves in
$\lisse(U/k,c)$ with
$$
3c\rank(\sheaf{F}_1)(\rank(\sheaf{F}_1)+\rank(\sheaf{F}_2))<\sqrt{|k|},
$$
then $\sheaf{F}_1$ and $\sheaf{F}_2$ are geometrically isomorphic if
and only if their trace functions coincide on $U(k)$, up to a fixed
multiplicative constant of modulus $1$.
\par
In particular, if $c\geq 1$ and $r\geq 1$ satisfy $6cr^2<\sqrt{|k|}$,
the map $\sheaf{F}\mapsto \frfn{\sheaf{F}}{k}$ is injective on any set
of representatives of geometric isomorphism classes of objects in
$\lisse_r(U/k,c)$.
\end{corollary}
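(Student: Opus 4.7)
The plan is to deduce all three assertions from Lemma~\ref{lm-orthogonality}, using it as a quantitative version of the fact that the trace functions are ``almost orthonormal''.

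For part~(1), I would apply Lemma~\ref{lm-orthogonality}(1) directly to $\sheaf{F}_1=\sheaf{F}$: it gives
\[
\frac{1}{|k|}\sum_{x\in U(k)}|\frtr{\sheaf{F}}{k}{x}|^2\;\geq\; 1-\frac{3c\,r^2}{\sqrt{|k|}},
\]
and under the hypothesis $3cr^{2}<\sqrt{|k|}$ the right-hand side is strictly positive, forcing $\frfn{\sheaf{F}}{k}$ to be non-zero somewhere in $U(k)$.

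For part~(2), the forward direction is essentially formal. If $\sheaf{F}_{1}$ and $\sheaf{F}_{2}$ are geometrically isomorphic, fix an isomorphism $\phi$ of the underlying geometric representations. Since both sheaves are geometrically irreducible, Schur's Lemma applied to the conjugate $\rho_{2}(\gamma)\phi\rho_{1}(\gamma)^{-1}$ for $\gamma\in\pi_{1}(U)$ shows that $\phi$ is equivariant up to a scalar depending only on the image of $\gamma$ in $\Gal(\bar k/k)$. Hence there is a character $\chi$ of $\Gal(\bar k/k)$ with $\sheaf{F}_{1}\simeq\sheaf{F}_{2}\otimes\chi$, and pointwise purity of weight $0$ of both sheaves forces $|\chi(\frob_{k})|=1$. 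Writing $\lambda=\chi(\frob_{k})$, we obtain $\frfn{\sheaf{F}_{1}}{k}=\lambda\,\frfn{\sheaf{F}_{2}}{k}$ on $U(k)$, with $|\lambda|=1$.

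For the converse, suppose $\frfn{\sheaf{F}_{1}}{k}=\lambda\,\frfn{\sheaf{F}_{2}}{k}$ on $U(k)$ with $|\lambda|=1$, but that $\sheaf{F}_{1}$ and $\sheaf{F}_{2}$ are \emph{not} geometrically isomorphic. Since $|\lambda|=1$, the pointwise equality gives $|\frtr{\sheaf{F}_{1}}{k}{x}|=|\frtr{\sheaf{F}_{2}}{k}{x}|$ for all $x\in U(k)$; applying Lemma~\ref{lm-orthogonality}(1) to $\sheaf{F}_{1}$ yields
\[
\Bigl|\frac{1}{|k|}\sum_{x\in U(k)}\frtr{\sheaf{F}_{1}}{k}{x}\,\overline{\frtr{\sheaf{F}_{2}}{k}{x}}\Bigr|
=\frac{1}{|k|}\sum_{x\in U(k)}|\frtr{\sheaf{F}_{1}}{k}{x}|^{2}
\;\geq\; 1-\frac{3c\,r_{1}^{2}}{\sqrt{|k|}}.
\]
On the other hand, Lemma~\ref{lm-orthogonality}(2) bounds the same quantity by $3c\,r_{1}r_{2}/\sqrt{|k|}$, so $\sqrt{|k|}\leq 3c\,r_{1}(r_{1}+r_{2})$, contradicting the hypothesis. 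The only step requiring any real care is the forward direction of~(2); the bookkeeping with Schur's Lemma to produce the twisting character is standard but worth writing out carefully.

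The last ``in particular'' statement follows by taking $r_{1}=r_{2}=r$ in~(2): the condition becomes $6cr^{2}<\sqrt{|k|}$, and then two sheaves with \emph{equal} trace functions on $U(k)$ (i.e.\ the case $\lambda=1$) are geometrically isomorphic, so on a set of representatives of geometric isomorphism classes the trace-function map is injective.
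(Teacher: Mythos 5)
Your proof is correct and follows essentially the same route as the paper: part (1) and the substantive direction of (2) are deduced from Lemma~\ref{lm-orthogonality} exactly as in the paper, by playing the lower bound $1-3c\,r_1^2/\sqrt{|k|}$ for the correlation against the upper bound $3c\,r_1r_2/\sqrt{|k|}$ to force $\sqrt{|k|}\leq 3c\,r_1(r_1+r_2)$ when the sheaves are not geometrically isomorphic. The only difference is that you write out the Schur's Lemma/twisting-character argument for the direction the paper simply cites as a well-known property of geometric isomorphism, which is a harmless (and correct) elaboration.
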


\begin{proof}
  For (1), it is enough to note that the assumption implies by
  Lemma~\ref{lm-orthogonality} that
$$
\sum_{x\in U(k)}{|\frtr{\sheaf{F}}{k}{x}|^2}>0.
$$
\par
For (2), only the ``only if'' part needs proof (by a well-known property
of geometric isomorphism: the trace functions coincide on $k$ up to a
fixed non-zero scalar). So assume that there exists $\theta\in\Rr$
such that
$$
\frtr{\sheaf{F}_1}{k}{x}=e^{i\theta}\frtr{\sheaf{F}_2}{k}{x}
$$
for all $x\in U(k)$. We then obtain
$$
\Bigl|\frac{1}{|k|}\sum_{x\in U(k)}{
  \frtr{\sheaf{F}_1}{k}{x}\overline{ \frtr{\sheaf{F}_2}{k}{x} } }
\Bigr|= \frac{1}{|k|}\sum_{x\in U(k)}{ |\frtr{\sheaf{F}_1}{k}{x}|^2}
\geq 1-\frac{3c\rank(\sheaf{F}_1)^2}{\sqrt{|k|}}
$$
by Lemma~\ref{lm-orthogonality}. If, by contraposition, the sheaves
were \emph{not} geometrically irreducible, we would get
$$
\Bigl|\frac{1}{|k|}\sum_{x\in U(k)}{
  \frtr{\sheaf{F}_1}{k}{x}\overline{ \frtr{\sheaf{F}_2}{k}{x} } }
\Bigr| \leq
\frac{3c\rank(\sheaf{F}_1)\rank(\sheaf{F}_2)}{\sqrt{|k|}}
$$
by the same lemma, and by comparing we deduce that
$$
\sqrt{|k|}\leq
3c\rank(\sheaf{F}_1)(\rank(\sheaf{F}_1)+\rank(\sheaf{F}_2))
$$
in that case.
\end{proof}

We continue with the data $k$, $X/k$ and $Y/k$ as above. We let $V$
denote the vector space of complex-valued functions $U(k)\lra \Cc$. We
view $V$ as a complex Hilbert space with the inner product
$$
\langle \varphi_1,\varphi_2\rangle=\frac{1}{|k|}\sum_{x\in
  U(k)}{\varphi_1(x)\overline{\varphi_2(x)}},
$$
or as a \emph{real} Hilbert space isomorphic to $\Rr^{2|U(k)|}$ with
coordinates given by 
$$
(\Reel\varphi(x),\Imag\varphi(x))_{x\in
  U(k)},
$$
and with inner product
$$
\langle v,w\rangle_{\Rr}=\frac{1}{|k|}\sum_{i=1}^{2|U(k)|}{ v_iw_i}
$$
for $v$, $w\in \Rr^{2|U(k)|}$.
\par
We have the compatibility
$$
\|\varphi\|=\|\varphi\|_{\Rr}
$$
for $\varphi\in V$, with obvious notation. Similarly, the angle
$\theta_{\Rr}(\varphi_1,\varphi_2)\in [0,\pi[$ between $\varphi_1$,
$\varphi_2\in V$ (viewed as a real Hilbert space) is defined by
$$
\langle\varphi_1,\varphi_2\rangle_{\Rr}= \|\varphi_1\| \|\varphi_2\|
\cos\theta_{\Rr}(\varphi_1,\varphi_2),
$$
and also satisfies
$$
\cos\theta_{\Rr}(\varphi_1,\varphi_2)= \frac{\Reel(\langle
  \varphi_1,\varphi_2\rangle)}{\|\varphi_1\|\|\varphi_2\|}.
$$
\par
Fix now $c\geq 1$ and $r\leq c$. If $|k|>3cr^2$ and $\sheaf{F}\in
\lisse(U/k,c)$ has rank $\leq r$, we can define
$$
v_{\sheaf{F}}=\frac{\varphi}{ \| \varphi\| }
$$
where $\varphi$ is the restriction to $U(k)$ of $\frfn{\sheaf{F}}{k}$,
since the trace function is not identically zero by the previous
corollary. This is a vector on the unit sphere of $V$.

\begin{lemma}[Spherical codes from sheaves]\label{lm-cos}
  With notation as above, for fixed $c\geq 1$ and $r\leq c$ with
  $12cr^2<\sqrt{|k|}$, we have
$$
\cos\theta_{\Rr}(v_{\sheaf{F}_1},v_{\sheaf{F}_2})\leq
\frac{6cr^2}{\sqrt{|k|}} \leq \frac{3\sqrt{10}cr^2}{\sqrt{2|U(k)|}}
$$
for any sheaves $\sheaf{F}_1$ and $\sheaf{F}_2$ in $\lisse(U/k,c)$
which are not geometrically isomorphic and have rank $\leq r$.
\end{lemma}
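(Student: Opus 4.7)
The plan is to combine the two estimates of Lemma~\ref{lm-orthogonality} in the obvious way: part (2) supplies an upper bound for the inner product, part (1) supplies a lower bound for each norm, and the ratio gives the cosine bound. Writing $\varphi_i$ for the restriction of $\frfn{\sheaf{F}_i}{k}$ to $U(k)$, I would start from
\[
\cos\theta_{\Rr}(v_{\sheaf{F}_1},v_{\sheaf{F}_2}) = \frac{\Reel\langle\varphi_1,\varphi_2\rangle}{\|\varphi_1\|\|\varphi_2\|} \leq \frac{|\langle\varphi_1,\varphi_2\rangle|}{\|\varphi_1\|\|\varphi_2\|},
\]
then bound the numerator by Lemma~\ref{lm-orthogonality}(2), applied with $r_1,r_2\leq r$, by $3cr^2/\sqrt{|k|}$. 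Lemma~\ref{lm-orthogonality}(1) gives $\|\varphi_i\|^2 \geq 1 - 3cr^2/\sqrt{|k|}$, and the standing hypothesis $12cr^2<\sqrt{|k|}$ forces $3cr^2/\sqrt{|k|} < 1/4$, so $\|\varphi_i\|^2 > 3/4$ and $\|\varphi_1\|\|\varphi_2\| > 3/4$. Division yields $\cos\theta_{\Rr} < 4cr^2/\sqrt{|k|} \leq 6cr^2/\sqrt{|k|}$, which is the first claimed inequality.

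For the second inequality $6cr^2/\sqrt{|k|} \leq 3\sqrt{10}cr^2/\sqrt{2|U(k)|}$, a direct manipulation shows it reduces to $|U(k)| \leq 5|k|/4$. Since the conductor already includes the genus, $g \leq c$, and the hypothesis (with $r \geq 1$) gives $g \leq c \leq \sqrt{|k|}/12$. The Weil bound applied to the smooth projective curve $Y$ then yields
\[
|U(k)| \leq |Y(k)| \leq |k| + 2g\sqrt{|k|} + 1 \leq |k| + \tfrac{1}{6}|k| + 1,
\]
which is below $5|k|/4$ as soon as $|k| \geq 12$; this is automatic since the hypothesis forces $|k| > 144c^2r^4 \geq 144$.

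No genuine obstacle is expected: the lemma is essentially routine bookkeeping given Lemma~\ref{lm-orthogonality} and the Weil bound for $Y$. The only mildly delicate point is that the single constraint $12cr^2<\sqrt{|k|}$ must play a dual role — keeping the denominator $\|\varphi_1\|\|\varphi_2\|$ uniformly bounded away from zero in the first step, and simultaneously making the Weil correction term $2g\sqrt{|k|}$ negligible compared to $|k|$ in the second step — and one should verify that the numerical constants line up (they do, with room to spare).
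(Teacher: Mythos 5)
Your argument is correct and follows essentially the same route as the paper: bound the inner product and the norms via Lemma~\ref{lm-orthogonality}, use the hypothesis $12cr^2<\sqrt{|k|}$ to keep the denominator away from zero (the paper uses $x/(1-x)\leq 2x$ for $x\leq 1/4$ where you bound $\|\varphi_1\|\|\varphi_2\|>3/4$, an immaterial difference), and then reduce the second inequality to $|U(k)|\leq \tfrac54|k|$ via the Weil bound for $Y$ together with $g\leq c\leq \sqrt{|k|}/12$. The constants check out, so nothing is missing.
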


\begin{proof}
We have
$$
\cos\theta_{\Rr}(v_{\sheaf{F}_1},v_{\sheaf{F}_2})
=\frac{\Reel(\langle \varphi_1,\varphi_2\rangle)}{\|\varphi_1\|
\|\varphi_2\|}
$$
where $\varphi_i$ is the restriction of $\frfn{\sheaf{F}_i}{k}$ to
$U(k)$. By Lemma~\ref{lm-orthogonality}, we have
$$
|\langle \varphi_1,\varphi_2\rangle|
\leq \frac{3cr^2}{\sqrt{|k|}},\quad
\quad
\|\varphi_1\|\|\varphi_2\|\geq 1-\frac{3cr^2}{\sqrt{|k|}}.
$$
\par
Since
$$
\frac{x}{(1-x)}\leq 2x
$$
for $0\leq x\leq 1/4$, and
$$
|U(k)|\leq |k|+2g\sqrt{|k|}+1\leq |k|+3g\sqrt{|k|}\leq
\frac{5}{4}\sqrt{|k|} 
$$
under our assumption $12cr^2\leq |k|^{1/2}$, we get the result.
\end{proof}

It follows directly from this lemma, Corollary~\ref{cor-injective} and
from the definition in Section~\ref{sec-prelim} that for $r\leq c$ and
$12cr^2<\sqrt{|k|}$, we have
$$
|\lissei_r(U/k,c)|\leq
M\Bigl(2|U(k)|,\arccos\Bigl(\frac{3\sqrt{10}cr^2}{\sqrt{2|U(k)|}}
\Big)\Bigr).
$$
\par
We can then apply Theorem~\ref{th-kl} with parameters
$$
(n,\gamma)=(2|U(k)|,3\sqrt{10}cr^2),
$$
and the upper-bound in Proposition~\ref{pr-count-lisse} follows as
soon as the condition $n\geq 2\gamma\lceil (\gamma+1)^2\rceil$ in
Theorem~\ref{th-kl} is satisfied. Since $|U(k)|\geq |Y(k)|-c\geq
|k|-2g\sqrt{|k|}+1-c\geq \frac{5}{6}|k|-c$, this condition is
satisfied provided
$$
\frac{5|k|}{6} \geq
3\sqrt{10}cr^2\left\{(3\sqrt{10}cr^2+1)^2+1\right\}+c,
$$
which holds for $|k|\geq 1265c^3r^6$, a condition which also implies
the previous conditions on $|k|$ from Corollary~\ref{cor-injective}
and Lemma~\ref{lm-cos}.

\section{Comments}\label{sec-comments}

The bounds we have obtained are certainly far from the truth. In fact,
it would be even more interesting to have good lower bounds, but this
question is not currently very well understood. This can be
illustrated with the following two remarks:
\par
(1) (Pointed out by Venkatesh): We do not know if, given a large
enough rank $r\geq 1$, there exists a single unramified cusp form on
$\GL_r(K)$, where $K$ is the function field of a fixed curve over a
finite field of genus $>1$; in our notation, the question is, given an
open dense set $U\subset \Aa^1$ defined over $k$, whether there exists
\emph{some} geometrically irreducible lisse sheaf $\sheaf{F}$ on $U$
for \emph{every} large enough rank $r\geq 1$.
\par
(2) (Pointed out by Katz): Deligne and
Flicker~\cite[Prop. 7.1]{deligne-flicker} prove, using automorphic
methods, that there exist $q=|k|$ lisse sheaves on $(\Pp^1-S)/k$,
where $S$ is an \'etale divisor of degree four (e.g., on
$\Pp^1-\{\text{four points in } k\}$) of rank $2$, with ``principal
unipotent local monodromy'' at the singularities (see~\cite[\S
1]{deligne-flicker} for precise definitions.) However, only a bounded
number of such sheaves are explicitly known (bounded as $q$ varies)!
Examples include semistable families of elliptic curves with four
singular fibers, from Beauville's classification~\cite{beauville}.
\par
We now indicate some examples of families of sheaves which give easy
lower bounds. We denote by $p$ the characteristic of $k$, and we
consider $X=\Aa^1$ for simplicity.
\par
(1) If $U\injecte \Aa^1$ is a dense open subset (defined over $k$),
and $f_1$ (resp. $f_2$) is a regular function $f_1\,:\, U\lra \Aa^1$
(resp. a non-zero regular function $f_2\,:\, U\lra \Gg_m$) both
defined over $k$, one has the Artin-Schreier-Kummer lisse sheaf
$$
\sheaf{F}=\sheaf{L}_{\psi(f_1)}\otimes\sheaf{L}_{\chi(f_2)}
$$
defined for any non-trivial additive character $\psi\,:\, k\lra
\bar{\Qq}_{\ell}^{\times}$ and multiplicative character $\chi\,:\,
k^{\times}\lra \bar{\Qq}_{\ell}^{\times}$, which satisfy
$$
\frtr{\sheaf{F}}{k}{x}=\psi(f_1(x))\chi(f_2(x))
$$
for $x\in U(k)$. These sheaves are all of rank $1$ (in particular,
they are geometrically irreducible) and pointwise pure of weight
$0$. Moreover, possible geometric isomorphisms among them are
well-understood (see, e.g.,~\cite[Sommes Trig. (3.5.4)]{deligne}): if
$(g_1, g_2)$ are another pair of functions we have a geometric
isomorphism
$$
\sheaf{L}_{\psi(f_1)}\otimes\sheaf{L}_{\chi(f_2)}\simeq
\sheaf{L}_{\psi(g_1)}\otimes\sheaf{L}_{\chi(g_2)}
$$
if and only if: (1) $f_1-g_1$ is of the form 
$$
f_1-g_1=h^{p}-h+C
$$
for some regular function $h$ on $U$ and some constant $C\in\bar{k}$;
(2) $f_2/g_2$ is of the form
$$
\frac{f_2}{g_2}=Dh^{d}
$$
where $d\geq 2$ is the order of the multiplicative character $\chi$,
$h$ is a non-zero regular function on $U$ and $D\in\bar{k}^{\times}$. 
\par
Furthermore, the conductor of these sheaves is fairly easy to
compute. The singularities are located (at most) at $x\in
\Pp^1-U$. For each such $x$, the Swan conductor at $x$ is determined
only by $f_1$, and is bounded by the order of the pole of $f_1$ (seen
as a function $\Pp^1\lra \Pp^1$) at $x$, and there is equality if this
order is $<p$.
\par
In particular, if $\chi$ is trivial, the conductor of
$\sheaf{L}_{\psi(f_1)}$ is $\leq 1+\deg(f_1)$. Taking polynomials of
degree $\leq c-1$, modulo constants and modulo polynomials of the form
$h^p-h$ where $\deg(h)\leq \lfloor \frac{c-1}{p}\rfloor\leq c/2$, we
see that we have
$$
|\lissei_1(\Aa^1/k,c)|\geq |k|^{c-1-c/2},
$$
as indicated in the remark after Theorem~\ref{th-counting}. 
\par
(2) The following examples are studied by Katz~\cite[Ex.
7.10.2]{katz-esde}. Let $C/k$ be a smooth projective geometrically
connected algebraic curve, and
$$
f\,:\, C\lra \Pp^1
$$
a non-constant map defined over $k$ which is not a $p$-th power. Let
$D\subset C$ be the divisor of poles of $f$. Let $Z\subset C-D$ be the
set of zeros of the differential $df$, and let $S=f(Z)$ be the set of
singular values of $f$. One says that $f$ is \emph{supermorse} if
$\deg(f)<p$, all zeros of $df$ are simple, and $f$ separates these
zeros (i.e., $|S|=|Z|$). Then, denoting by 
$$
f_0\,:\, C-D\lra \Aa^1
$$
the restriction of $f$ to $C-D$, the sheaf
$$
\sheaf{F}_f=\ker(\Tr\,:\, f_{0,*}\bar{\Qq}_{\ell}\lra
\bar{\Qq}_{\ell})
$$
is an irreducible middle-extension sheaf on $\Aa^1/k$, of rank
$\deg(f)-1$, pointwise pure of weight $0$ and lisse on $\Aa^1-S$ with
$$
\frtr{\sheaf{F}_f}{k}{x}=|\{y\in C(k)\,\mid\, f(y)=x\}|-1
$$
for $x\in k-S$. This sheaf is also everywhere tamely ramified, so its
conductor is $|Z|+\deg(f)-1$. However, it is not obvious how to count
how many sheaves with conductor $\leq c$ one may obtain in this
manner. 
\par
(3) There exists a Fourier transform on middle-extension sheaves on
$\Aa^1/k$, corresponding to the Fourier transform of trace functions,
which was defined by Deligne and developed especially by Laumon;
precisely, consider a middle-extension sheaf $\sheaf{F}$ which is
geometrically irreducible, of weight $0$, and not geometrically
isomorphic to $\sheaf{L}_{\psi}$ for some additive character
$\psi$. Fix a non-trivial additive character $\psi$. Then the Fourier
transform $\sheaf{G}=\ft_{\psi}(\sheaf{F})(1/2)$ (where the Tate twist
is defined after picking the square root of $|k|$ in
$\bar{\Qq}_{\ell}$ mapping to $\sqrt{|k|}>0$ via our chosen $\iota$)
satisfies
$$
\frtr{\sheaf{G}}{k}{t}=-\frac{1}{\sqrt{|k|}}
\sum_{x\in k}{\frtr{\sheaf{F}}{k}{x}\psi(tx)}
$$
for $t\in k$, and it is a middle-extension sheaf, geometrically
irreducible and pointwise pure of weight $0$ (see~\cite[\S
7]{katz-esde} for a survey and details). Moreover, one can show that
the conductor of $\sheaf{G}$ is bounded polynomially in terms of the
conductor of $\sheaf{F}$ (see, e.g.,~\cite[Prop. 7.2]{fkm}).
However, even without inquiring about possible fixed points of the
Fourier transform, its use would at best double any given lower bound
for the number of sheaves with bounded ramification.

\section{Trace norms and random functions}\label{sec-random}

We describe in this section the proof of Theorem~\ref{th-norm}. The
idea is to show that ``random'' functions defined on $k$ have large
trace norms:

\begin{theorem}\label{th-random-norm}
  Let $X$ be a complex-valued random variable with $\expect(X)=0$,
  $\expect(|X|^2)>0$, $|X|\leq 1$. For $p$ prime, let $\varphi$ be
  random functions in $C(k)$ such that the values $\varphi(x)$ are
  independent and identically distributed like $X$. For any $N\geq 1$,
  there exists $\alpha\geq 1$ depending only on $N$ and on the law of
  $X$, such that we have
$$
\proba\Bigl(\frac{\sqrt{|k|}}{\alpha \log |k|} \leq
\tnorm{\varphi}{s}\leq \sqrt{|k|}\Bigr) =1+O(|k|^{-N}),
$$
for all $s\geq 6$, where the implied constant depends only on $N$ and
on the law of $X$.
\end{theorem}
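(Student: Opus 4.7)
The upper bound $\tnorm{\varphi}{s}\leq \sqrt{|k|}$ holds deterministically, combining \eqref{eq-trivial-norm} with $\|\varphi\|_1\leq\|X\|_\infty\leq 1$. For the lower bound, the plan is to test an arbitrary decomposition against $\varphi$ itself. Writing $\psi=\sum_i\lambda_i\frfn{\sheaf{F}_i}{k}$ and $\eta=\sqrt{|k|}\sum_j\mu_j\delta_{a_j}$ and setting $T=\sum_i|\lambda_i|\cond(\sheaf{F}_i)^s+\sum_j|\mu_j|$, the identity $\|\varphi\|_2^2=\langle\varphi,\psi\rangle+\langle\varphi,\eta\rangle$ (with the normalized inner product on $C(k)$) reduces the task to bounding each right-hand term. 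Hoeffding's inequality applied to the i.i.d.\ sum $\frac{1}{|k|}\sum_x|\varphi(x)|^2$ gives $\|\varphi\|_2^2\geq \expect|X|^2/2$ with probability $1-O(|k|^{-N})$, while $|\langle\varphi,\eta\rangle|\leq T/\sqrt{|k|}$ is immediate from $\expect\varphi=0$ and $|X|\le 1$.

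The heart of the proof is a uniform estimate on the correlations $\langle\varphi,\frfn{\sheaf{F}}{k}\rangle$. For a single sheaf $\sheaf{F}$ of conductor $c$, this is an average of $|k|$ independent mean-zero complex random variables of modulus $\leq c/|k|$ (since $|\frfn{\sheaf{F}}{k}|\leq \rank\leq c$), so Hoeffding's inequality yields
\[
\proba(|\langle\varphi,\frfn{\sheaf{F}}{k}\rangle|>\tau)\leq 4\exp(-A|k|\tau^2/c^2)
\]
for an absolute constant $A>0$. Coupling this with the count $|\mexti_{\Aa^1}(k,c)|\leq C|k|^{Bc^6}$ from Theorem \ref{th-counting} and a dyadic union bound over $c\leq C_{\max}:=(|k|/1265)^{1/9}$, the choice $\tau_c=Kc^4\sqrt{\log|k|/|k|}$ with $K=K(N)$ large enough produces: on an event of probability $1-O(|k|^{-N})$, every sheaf of conductor $c\leq C_{\max}$ satisfies $|\langle\varphi,\frfn{\sheaf{F}}{k}\rangle|\leq Kc^4\sqrt{\log|k|/|k|}$.

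Feeding this back into $\langle\varphi,\psi\rangle$, the small-conductor part contributes at most $K\sqrt{\log|k|/|k|}\sum|\lambda_i|c_i^s\leq KT\sqrt{\log|k|/|k|}$, using $c_i^4\leq c_i^s$ (valid since $s\geq 4$); the large-conductor part is controlled by the trivial bound $|\langle\varphi,\frfn{\sheaf{F}}{k}\rangle|\leq c$ together with $|\lambda_i|\leq T/c_i^s$, contributing at most $T/C_{\max}^{s-1}\leq T/\sqrt{|k|}$ precisely because $s\geq 6$ ensures $C_{\max}^{s-1}\geq\sqrt{|k|}$. Collecting all four estimates,
\[
\tfrac12\expect|X|^2\leq \|\varphi\|_2^2\leq (K+2)T\sqrt{\log|k|/|k|}
\]
on an event of probability $1-O(|k|^{-N})$, so $T\gg\sqrt{|k|/\log|k|}$, and \emph{a fortiori} $T\geq\sqrt{|k|}/(\alpha\log|k|)$ for suitable $\alpha=\alpha(N,X)$. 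The main obstacle is the uniform inner-product estimate: the family of sheaves has cardinality growing like $|k|^{Bc^6}$, but this is exactly matched by the Hoeffding exponent $|k|\tau_c^2/c^2\sim K^2 c^6\log|k|$, so the dyadic sum converges geometrically once $K^2/A$ exceeds $B$. The two thresholds---$C_{\max}\sim|k|^{1/9}$ from Theorem \ref{th-counting}'s hypothesis $|k|\geq 1265c^9$, and the requirement $s\geq 6$ from the trivial large-conductor tail---are compatible precisely because $(s-1)/9\geq 1/2$.
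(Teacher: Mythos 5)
Your proposal is correct and follows essentially the same route as the paper: the ``test the decomposition against $\varphi$'' step is exactly Proposition~\ref{pr-lower}, your single-sheaf Hoeffding estimate and the dyadic union bound over conductors via Theorem~\ref{th-counting} are Lemma~\ref{lm-one-ortho} and the lemma following it, the concentration of $\|\varphi\|_2^2$ is \eqref{eq-concentr-2}, and your trivial treatment of conductors beyond $C_{\max}\approx |k|^{1/9}$ (the only place $s\geq 6$ enters) matches the paper's handling of $\cond(\sheaf{F})>\tfrac{4}{10}|k|^{1/10}$. One cosmetic repair: bound the large-conductor contribution by $\sum_{c_i>C_{\max}}|\lambda_i|c_i\leq C_{\max}^{1-s}\sum_i|\lambda_i|c_i^s\leq T\,C_{\max}^{1-s}$, pulling out $c_i^{1-s}\leq C_{\max}^{1-s}$, rather than via the per-term inequality $|\lambda_i|\leq T/c_i^s$, which does not sum correctly when several large-conductor terms occur.
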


This result easily implies Theorem~\ref{th-norm}.

\begin{proof}[Proof of Theorem~\ref{th-norm}]
  We must show the existence of a non-zero function $\varphi\in
  C(k)$ such that
$$
\tnorm{\varphi}{s}\gg \frac{\sqrt{|k|}}{\log |k|} \|\varphi\|_1.
$$
\par 
Since $\tnorm{\varphi}{s}\geq \tnorm{\varphi}{6}$, it is enough to do
this for $s=6$, and this follows from Theorem~\ref{th-random-norm}
(for $N=1$, for instance) and the property~(\ref{eq-concentr-1})
proved below.
\end{proof}

Theorem~\ref{th-random-norm} is a simple probabilistic argument, which
uses little knowledge of trace functions in addition to the counting
result Theorem~\ref{th-counting}. However, it requires some
quantitative upper bound for $|\mext_{\Aa^1}(k,c)|$, and in fact it
requires some control even for $c$ varying with $k$. 
\par
First, we note the following criterion for lower bounds of
$\tnorm{\varphi}{s}$.

\begin{proposition}[Lower bounds for trace norms]\label{pr-lower}
  Let $k$ be a finite field and let $\varphi\in C(k)$ be any
  function. Let $s\geq 1$, $\gamma>0$ and $A\geq 0$ be numbers such
  that
$$
|\varphi(y)|\leq A|k|^{1/2-\gamma}
$$
for all $y\in k$ and
$$
  \Bigl|\sum_{x\in k}{K(x)\varphi(x)}\Bigr|\leq
  A|k|^{1-\gamma}\cond(K)^s,\quad
  \sum_{x\in k}|\varphi(x)|^2\geq A^{-1}|k|
$$
for all trace functions $K=\frfn{\sheaf{F}}{k}$ of sheaves
$\sheaf{F}\in \mext(k)$. Then we have
$$
\tnorm{\varphi}{s}\geq A^{-2}|k|^{\gamma}.
$$
\end{proposition}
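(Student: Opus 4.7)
The plan is to test $\varphi$ against itself via $\overline{\varphi}$ using any admissible decomposition, and to balance the resulting identity against the three hypotheses: the lower bound on $\sum|\varphi|^2$ produces a large quantity that the right-hand side of the decomposition must absorb, forcing the cost coefficients to be large.

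Concretely, fix any decomposition
$$\varphi=\sum_i\lambda_iK_i+\sqrt{|k|}\sum_j\mu_j\delta_{a_j}$$
with $K_i=\frfn{\sheaf{F}_i}{k}$ and $\sheaf{F}_i\in\mext(k)$, and compute
$$\sum_{x\in k}|\varphi(x)|^2=\sum_i\lambda_i\sum_{x\in k}K_i(x)\overline{\varphi(x)}+\sqrt{|k|}\sum_j\mu_j\overline{\varphi(a_j)}.$$
The left-hand side is at least $A^{-1}|k|$ by the third hypothesis. For the delta terms, the pointwise bound $|\varphi(a_j)|\leq A|k|^{1/2-\gamma}$ gives immediately $\sqrt{|k|}\,|\overline{\varphi(a_j)}|\leq A|k|^{1-\gamma}$.

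For the trace-function terms, the key observation is that complex conjugation sends a trace function of a sheaf in $\mext(k)$ to the trace function of another sheaf in $\mext(k)$ with the same conductor: since $\sheaf{F}_i$ is pointwise pure of weight $0$, its Frobenius eigenvalues have modulus $1$, so $\overline{K_i(x)}=\frtr{\check{\sheaf{F}_i}}{k}{x}$, and $\cond(\check{\sheaf{F}_i})=\cond(\sheaf{F}_i)$ because rank, genus, singular set and Swan conductors all agree for a sheaf and its dual. Applying the second hypothesis with $K=\overline{K_i}$ and then conjugating yields
$$\Bigl|\sum_{x\in k}K_i(x)\overline{\varphi(x)}\Bigr|=\Bigl|\sum_{x\in k}\overline{K_i(x)}\,\varphi(x)\Bigr|\leq A|k|^{1-\gamma}\cond(\sheaf{F}_i)^s.$$

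Combining these estimates with the triangle inequality gives
$$A^{-1}|k|\leq A|k|^{1-\gamma}\Bigl(\sum_i|\lambda_i|\cond(\sheaf{F}_i)^s+\sum_j|\mu_j|\Bigr),$$
so the cost of any admissible decomposition is at least $A^{-2}|k|^\gamma$, and taking the infimum over all such decompositions yields $\tnorm{\varphi}{s}\geq A^{-2}|k|^\gamma$. The only substantive step beyond Cauchy--Schwarz-style bookkeeping is the duality remark that justifies applying the trace-function hypothesis to the conjugated function $\overline{\varphi}$ through the sheaf-theoretic dual; everything else follows by the triangle inequality.
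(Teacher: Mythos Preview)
Your proof is correct and is exactly the argument the authors have in mind; in fact the paper states the proposition without proof, treating it as straightforward, and your write-up supplies precisely the expected details (test $\varphi$ against $\overline{\varphi}$ via an arbitrary decomposition, use the duality $\overline{K}=\frfn{\check{\sheaf{F}}}{k}$ with $\cond(\check{\sheaf{F}})=\cond(\sheaf{F})$ to invoke the second hypothesis, and conclude by the triangle inequality).
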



\begin{remark}
  This result combined with~\cite[Cor. 1.6]{fkm} also allows us to
  obtain concrete examples of functions with large trace
  norms. Precisely, if $k=\Fp$ identified with $\{1,\ldots, p\}$ and
  $\varphi(n)=\rho_f(n)$ for $1\leq  n\leq p$, where
$$
f(z)=\sum_{n\geq 1}{\rho_f(n)n^{(\kappa-1)/2}e(nz)}
$$
is the Fourier expansion of a classical holomorphic cusp form with
weight $\kappa\geq 2$ and level $N\geq 1$ (and trivial nebentypus),
then for any $\eps>0$, we can derive
$$
\tnorm{\varphi}{s}\gg p^{1/8-\eps}
$$
for all $s\geq s_0$ and $p$ large enough, where $s_0$ is some absolute
constant, and where the implied constant depends on $f$ and
$\eps$. 
The same result holds for the Fourier coefficients of a Maass cusp
form.  It seems quite conceivable that this estimate should in fact be
true with $1/8$ replaced with $1/2$. More generally, it seems to be an
interesting de-randomization problem to construct explicit functions
$\varphi\in C(k)$ (say bounded by $1$) with $\tnorm{\varphi}{s}$ as
large as the value $\approx |k|^{1/2}$ given by
Theorem~\ref{th-random-norm} for random functions.
\end{remark}

We now begin the proof of Theorem~\ref{th-random-norm} with some
probabilistic preliminaries. We recall that a real-valued random
variable $X$ is called \emph{$\sigma$-subgaussian}, for some
$\sigma>0$, if
$$
\expect(e^{tX})\leq \exp\Bigl(\frac{\sigma^2t^2}{2}\Bigr)
$$
for all $t\in\Rr$.  The following properties are easy: (1) if $X$ is
$\sigma$-subgaussian, then 
$$
\proba(|X|\geq \alpha)\leq
2\exp\Bigl(-\frac{\alpha^2}{2\sigma^2}\Bigr)
$$
for all $\alpha\geq 0$, and (2) if $X_1$, \ldots, $X_k$ are
$\sigma_i$-subgaussian and independent and $a_i\in\Rr$, then
$a_1X_1+\cdots+a_kX_k$ is $\sigma$-subgaussian where
$$
\sigma^2=\sum_{i=1}^k{a_i^2\sigma_i^2}.
$$

We will use the following lemma:

\begin{lemma}\label{lm-probas}
  Let $k$ be a finite field. Let $\sigma>0$, and for $x\in k$, let
  $\varphi(x)$ be independent complex-valued random variables with the
  same distribution, such that $|\varphi(x)|\leq 1$,
  $\expect(\varphi(x))=0$, $\expect(|\varphi(x)|^2)=\sigma^2$.
\par
\emph{(1)} The random variables $\Reel(\varphi(x))$ and
$\Imag(\varphi(x))$ are $1$-subgaussian.
\par
\emph{(2)} There exists $\nu_1$, $\nu_2>0$ and $c_1$ $c_2>0$,
depending only on the common distribution of $\varphi(x)$, such that
\begin{align}
  \proba\Bigl(\sum_{x\in k}|\varphi(x)|^2\geq \nu_1 |k|\Bigr)&\geq
  1-e^{-c_1|k|^2},\label{eq-concentr-2}\\
  \proba\Bigl(\sum_{x\in k}|\varphi(x)|\geq \nu_2 |k|\Bigr)&\geq
  1-e^{-c_2|k|^2}\label{eq-concentr-1}.
\end{align}
\end{lemma}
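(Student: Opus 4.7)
The plan is to deduce both assertions from classical concentration inequalities for sums of independent bounded random variables: Hoeffding's lemma for part~(1), and Hoeffding's inequality for part~(2). Neither step is deep; the lemma essentially repackages standard probabilistic facts in the form needed in the proof of Theorem~\ref{th-random-norm}, and no genuine obstacle arises.

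For part~(1), I would invoke Hoeffding's lemma: if $Y$ is a real-valued random variable with $\expect(Y)=0$ supported in $[a,b]$, then $\expect(e^{tY})\leq \exp(t^2(b-a)^2/8)$ for every $t\in\Rr$. Setting $Y=\Reel(\varphi(x))$, the hypotheses $|\varphi(x)|\leq 1$ and $\expect(\varphi(x))=0$ force $Y\in[-1,1]$ with $\expect(Y)=0$, so the lemma gives $\expect(e^{tY})\leq e^{t^2/2}$, which is exactly the $1$-subgaussian condition. The identical argument applied to $\Imag(\varphi(x))$ finishes~(1).

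For part~(2), I would apply Hoeffding's inequality: if $Z_1,\dots,Z_n$ are independent with $Z_i\in[\alpha_i,\beta_i]$, then
\[
\proba\Bigl(\Bigl|\sum_i(Z_i-\expect(Z_i))\Bigr|\geq t\Bigr)\leq 2\exp\Bigl(-\frac{2t^2}{\sum_i(\beta_i-\alpha_i)^2}\Bigr).
\]
With $Z_x=|\varphi(x)|^2\in[0,1]$, common mean $\sigma^2>0$, and $t=\sigma^2|k|/2$, this yields $\proba(\sum_x|\varphi(x)|^2<\sigma^2|k|/2)\leq 2e^{-\sigma^4|k|/2}$, which gives \eqref{eq-concentr-2} with $\nu_1=\sigma^2/2$ and a suitable $c_1>0$. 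For \eqref{eq-concentr-1}, observe that $\mu:=\expect(|\varphi(x)|)\geq \expect(|\varphi(x)|^2)=\sigma^2>0$ (using $|\varphi|\leq 1$), and then apply the same inequality to $Z_x=|\varphi(x)|\in[0,1]$ with $t=\mu|k|/2$.

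The one subtlety worth flagging is that this standard route produces an exponent linear in $|k|$ rather than the $|k|^2$ that appears in the stated bound; either this reflects a harmless typographical slip or a sharper tail bound is intended, but in any event a rate $e^{-c|k|}$ is already far stronger than what Theorem~\ref{th-random-norm} needs in order to beat the polynomial factors coming from Theorem~\ref{th-counting}.
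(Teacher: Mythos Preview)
Your proposal is correct and follows essentially the same route as the paper: for (1) the paper simply cites the fact that a real centered random variable bounded by $\sigma$ is $\sigma$-subgaussian (which is Hoeffding's lemma), and for (2) the paper just says these are ``elementary instances of concentration of measure'' with a reference, which is exactly what your Hoeffding-inequality argument spells out.

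Your remark about the exponent is on point and worth keeping. A bound of the form $e^{-c|k|^2}$ cannot hold in general under the stated hypotheses: for instance, if $\varphi(x)$ takes the values $\pm 1$ each with probability $1/4$ and $0$ with probability $1/2$, then $|\varphi(x)|^2$ is Bernoulli$(1/2)$ and $\proba(\sum_x|\varphi(x)|^2=0)=2^{-|k|}$, so the complementary probability in \eqref{eq-concentr-2} is at least $e^{-c|k|}$ for no better $c$ than $\log 2$. The $|k|^2$ in the displayed bounds is thus a slip for $|k|$; as you note, $e^{-c|k|}$ is already vastly more than enough for the application to Theorem~\ref{th-random-norm}.
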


\begin{proof}
  (1) Since $|\Reel(\varphi(x))|\leq |\varphi(x)|\leq 1$ and
  $\expect(\Reel\varphi(x))=0$ (and similarly for the imaginary
  part), this follows from the fact that if $X$ is a real-valued
  random variable with $\expect(X)=0$ and which satisfies $|X|\leq
  \sigma$, then $X$ is $\sigma$-subgaussian (see, e.g.,~\cite[Example
  1.2]{bk}).
\par
(2) These are elementary instances of concentration of measure (see,
e.g.,~\cite[\S 2.1]{tao}).
\end{proof}


The next step shows that a random function is, with very high
probability, strongly orthogonal to the trace function of any sheaf
with small conductor:

\begin{lemma}\label{lm-one-ortho}
  Let $k$ be a finite field, $\varphi$ a random function on $k$ as
  above. Let $K=\frfn{\sheaf{F}}{k}$ for some $\sheaf{F}\in
  \mext(k)$. We have
$$
\proba\Bigl(\Bigl| \sum_{x\in k}K(x)\varphi(x) \Bigr|\geq \alpha
\cond(\sheaf{F})^s\sqrt{|k|\log |k|}\Bigr) \leq 8|k|^{-\demi \alpha^2
  \cond(\sheaf{F})^{2s-2}}
$$
for $s\geq 2$ and $\alpha>0$. 
\end{lemma}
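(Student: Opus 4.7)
The plan is to treat $S = \sum_{x \in k} K(x)\varphi(x)$ as a sum of independent complex-valued random variables and apply a subgaussian tail bound of Hoeffding type. Writing $K = K_1 + iK_2$ and $\varphi = \varphi_1 + i\varphi_2$ in real and imaginary parts, I separate
$$
\Re S = \sum_{x \in k} \bigl(K_1(x)\varphi_1(x) - K_2(x)\varphi_2(x)\bigr), \qquad
\Im S = \sum_{x \in k} \bigl(K_1(x)\varphi_2(x) + K_2(x)\varphi_1(x)\bigr),
$$
and control $|S|$ through these two real sums.

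For each $x \in k$, the summand in $\Re S$ is independent of the summands at other points, has zero mean (since $\expect \varphi(x) = 0$), and by Cauchy--Schwarz is bounded in absolute value by $|K(x)|\cdot|\varphi(x)| \leq |K(x)|$; the same holds for $\Im S$. By Hoeffding's lemma (the principle underlying Lemma~\ref{lm-probas}(1)), each of $\Re S$ and $\Im S$ is $\sigma$-subgaussian with $\sigma^2 = \sum_{x \in k} |K(x)|^2$, so the standard tail bound $\proba(|Y| \geq t) \leq 2\exp(-t^2/(2\sigma^2))$ combined with a union bound across the real and imaginary parts yields the estimate.

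The remaining ingredient is a uniform pointwise bound $|K(x)| \leq \rank(\sheaf{F}) \leq \cond(\sheaf{F})$ for every $x \in k$. At lisse points this is immediate from pointwise purity of weight $0$, since the $r$ Frobenius eigenvalues have modulus $1$. At singular points, the middle-extension stalk is the inertia-invariant subspace, of dimension $\leq \rank(\sheaf{F})$, on which the Frobenius eigenvalues have weight $\leq 0$ by Deligne's general inequality for $j_*$ of pure sheaves; hence their moduli remain $\leq 1$. This gives $\sigma^2 \leq \cond(\sheaf{F})^2 |k|$, and substituting $t \approx \alpha\cond(\sheaf{F})^s \sqrt{|k|\log|k|}$ into the subgaussian tail bound produces an estimate of the shape $C |k|^{-c\alpha^2 \cond(\sheaf{F})^{2s-2}}$ for absolute constants $c, C$.

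The principal obstacle is recovering exactly the numerical constants $8$ and $\demi$ in the statement. A naive split via $|S| \leq |\Re S| + |\Im S|$, or via $|S| \leq \sqrt{2}\max(|\Re S|,|\Im S|)$, loses a factor of $2$ or $\sqrt{2}$ in the exponent compared to applying the tail inequality to $\Re S$ with threshold $t = \alpha\cond^s\sqrt{|k|\log|k|}$ directly; matching $\demi$ exactly will require either asymmetric thresholds for the real and imaginary parts or a direct complex-valued subgaussian estimate. A secondary subtlety is that the purity hypothesis is imposed only on the lisse locus, so the bound $|K(x)| \leq \rank(\sheaf{F})$ at singular points must be justified via the weight-$\leq 0$ property of the middle extension rather than taken for granted.
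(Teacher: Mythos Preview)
Your approach is correct and is essentially the paper's own argument: decompose into real parts, invoke the subgaussian tail bound with $\sigma^2=\sum_x|K(x)|^2\leq |k|\cond(\sheaf{F})^2$, and substitute $\beta=\alpha\cond(\sheaf{F})^s\sqrt{|k|\log|k|}$. The only cosmetic difference is that the paper splits into the four sums $\sum_x K_i(x)\varphi_j(x)$ (with threshold $\beta/4$) and uses Lemma~\ref{lm-probas}(1) for each $\varphi_j$, whereas you keep $\Re S$ and $\Im S$ intact and apply Hoeffding to the combined summand; your version actually yields slightly better constants, and your worry about recovering exactly $\tfrac12$ is moot since the paper's own computation does not recover it either (the exponent is only used up to absolute constants when $\alpha$ is chosen large in the next lemma).
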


\begin{proof}
We write
$$
\varphi(x)=\varphi_1(x)+i\varphi_2(x),\quad\quad
K(x)=K_1(x)+iK_2(x)
$$
the real and imaginary parts of $\varphi(x)$ and $K(x)$. Expanding the
product, we have
$$
\proba\Bigl(\Bigl| \sum_{x\in k}K(x)\varphi(x) \Bigr|\geq \beta\Bigr)
\leq \sum_{1\leq i,j\leq 2}\proba\Bigl(\Bigl|
\sum_{x\in k}K_i(x)\varphi_j(x) \Bigr|\geq \beta/4\Bigr)
$$
for any $\beta\geq 0$. For $i=1$ or $2$, since the real and imaginary
parts $\varphi_j$ of $\varphi(x)$ are $1$-subgaussian and independent,
we get
$$
\proba\Bigl(\Bigl|\sum_{x\in k}{K_i(x)\varphi_j(x)}
\Bigr|\geq \beta)\leq
2\exp\Bigl(-\frac{\beta^2}{2\sigma_i^2}\Bigr)
$$
for $\beta\geq 0$, where
$$
\sigma_i^2=\sum_{x\in k}K_i(x)^2\leq \sigma_K^2=\sum_{x\in
  k}{|K(x)|^2}\leq |k|\cond(\sheaf{F})^2
$$
and we get the result by taking $\beta=\alpha\cond(K)^s\sqrt{|k|\log
  |k|}$.
\end{proof}

We now extend this to all sheaves with small enough conductor:

\begin{lemma}
  Let $k$ be a finite field, $\varphi$ a random function on $k$ as
  above.  For any $\gamma<1/9$, and any $N\geq 1$, there exists
  $\alpha\geq 1$, depending only on $\gamma$ and $N$, such that for
  any finite field $k$, we have
$$
\proba\Bigl( \Bigl| \sum_{x\in k}\frtr{\sheaf{F}}{k}{x}\varphi(x)
\Bigr|\geq \alpha \cond(\sheaf{F})^4\sqrt{|k|\log |k|}, \text{ for
  \emph{some} $\sheaf{F}$ with $\cond(\sheaf{F})\leq
  \frac{4}{10}|k|^{\gamma}$} \Bigr)\ll |k|^{-N}.
$$
\end{lemma}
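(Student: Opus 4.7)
The plan is to reduce the existence statement to a union bound over geometric isomorphism classes of sheaves with conductor at most $c_0 := \frac{4}{10}|k|^{\gamma}$, apply the preceding single-sheaf lemma to each representative, and count the classes by Theorem~\ref{th-counting}.

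First I would observe that two sheaves in $\mext_{\Aa^1}(k)$ that are geometrically isomorphic and pointwise pure of weight $0$ have trace functions on $k$-rational points that differ by a common scalar of modulus $1$ (the action of $\frob_k$ on the one-dimensional space of intertwining operators). Since the event $|\sum_x K(x)\varphi(x)|\geq\beta$ is invariant under multiplying $K$ by a modulus-$1$ scalar, it depends only on the geometric isomorphism class of $\sheaf{F}$, and it suffices to bound a union over representatives in $\mexti_{\Aa^1}(k,c_0)$. By Theorem~\ref{th-counting}, the number of classes of conductor $\leq c$ is bounded by $C|k|^{Bc^6}$, provided $|k|\geq 1265c^9$. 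For $c\leq c_0$ with $\gamma<1/9$, this reduces to $|k|^{1-9\gamma}\geq 1265(4/10)^9$, which holds as soon as $|k|$ is large enough in terms of $\gamma$.

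Next I would apply the preceding lemma with $s=4$: for each $\sheaf{F}$ of conductor $c$,
$$
\proba\Bigl(\Bigl|\sum_{x\in k}\frtr{\sheaf{F}}{k}{x}\varphi(x)\Bigr|\geq \alpha c^4\sqrt{|k|\log|k|}\Bigr)\leq 8|k|^{-\alpha^2 c^6/2}.
$$
The choice $s=4$ is dictated by the need to match the exponent $2s-2=6$ with the exponent of $c$ in Theorem~\ref{th-counting}. A union bound stratified by the value of $\cond(\sheaf{F})=c$ then yields
$$
\proba(\exists\,\sheaf{F}\ldots)\leq 8C\sum_{c=1}^{\lfloor c_0\rfloor}|k|^{(B-\alpha^2/2)c^6}.
$$
Choosing $\alpha$ depending only on $N$ (and on the absolute constants $B,C$) so that $\alpha^2/2\geq B+N+1$, each term is at most $|k|^{-(N+1)c^6}$, and summing the rapidly decreasing geometric series in $c$ gives the announced bound $O(|k|^{-N})$, with implied constant depending on $N$ (and on the law of $X$ through the subgaussian constant used in the single-sheaf lemma).

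The main obstacle is the balance of exponents in the union bound: the subgaussian tail contributes $|k|^{-\alpha^2 c^{2s-2}/2}$, while the counting theorem contributes $|k|^{Bc^6}$, so $s=4$ is the smallest value of $s$ for which enlarging $\alpha$ suffices to dominate the count for every $c\geq 1$ simultaneously. The role of the hypothesis $\gamma<1/9$ is precisely to ensure that Theorem~\ref{th-counting} applies throughout the range $c\leq c_0$, since this theorem requires $|k|\gtrsim c^9$. The reduction to geometric isomorphism classes in the first step is also essential, since without it the union could in principle be infinite (twists by geometrically trivial characters).
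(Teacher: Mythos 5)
Your proposal is correct and follows essentially the same route as the paper: a union bound over geometric isomorphism classes counted via Theorem~\ref{th-counting} (with $\gamma<1/9$ guaranteeing the hypothesis $|k|\geq 1265c^9$ throughout the range), combined with Lemma~\ref{lm-one-ortho} at $s=4$ so that the subgaussian exponent $c^{2s-2}=c^6$ dominates the counting exponent $Bc^6$ once $\alpha$ is large. The only differences are cosmetic: the paper stratifies the conductor range dyadically rather than by exact value, and leaves implicit the reduction to geometric isomorphism classes that you spell out.
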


\begin{proof}
  For any $s\geq 2$, let
\begin{multline*}
  \varpi(c_1,c_2)=\proba\Bigl( \Bigl| \sum_{x\in
    k}\frtr{\sheaf{F}}{k}{x}\varphi(x) \Bigr|\geq \alpha
  \cond(\sheaf{F})^s\sqrt{|k|\log |k|},\\ \text{ for \emph{some} sheaf
    $\sheaf{F}$ in $\mext(k)$ with $c_1\leq\cond(\sheaf{F})\leq c_2$}
  \Bigr).
\end{multline*}
\par
We have
$$
\varpi\leq \sum_{1\leq j\leq \bigl\lceil\gamma\frac{\log |k|}{\log
    2}\bigr\rceil}\varpi(2^{j-1},2^j)\leq \sum_{j\ll \gamma\log
  |k|}|\mexti(k,2^j)|\times 8|k|^{- \alpha^2 2^{(j-1)(2s-2)-1}}
$$
by Lemma~\ref{lm-one-ortho}. All conductors involved are $\leq
\frac{4}{10}|k|^{\gamma}<\left(\frac{1}{1265}\right)^{1/9}|k|^{1/9}$
by assumption, so we deduce
$$
\varpi\ll \sum_{j\ll \gamma\log|k|}|k|^{B2^{6j}- \alpha^2
  2^{(j-1)(2s-2)-1}},
$$
for some absolute constant $B\geq 1$ by
Theorem~\ref{th-counting}. Taking $s=4$, the exponent of $|k|$ is
$$
B2^{6j}- \alpha^2 2^{(j-1)(2s-2)-1} =B2^{6j}- \alpha^2
2^{6j-7}=(B-2^{-7}\alpha^2)2^{6j} \leq -\frac{\alpha^2}{2^{8}}2^{6j}
$$
under the assumption that $\alpha^2\geq 2^{8}B$, and we get $\varpi\ll
(\log |k|)|k|^{-\alpha^2/4}$, which gives the result by taking
$\alpha>0$ large enough.
\end{proof}

\begin{proof}[Proof of Theorem~\ref{th-random-norm}]
  For any function $\varphi\in C(k)$ with $|\varphi|\leq 1$, and any
  sheaf $\sheaf{F}$ in $\mext(k)$, we have trivially
$$
\Bigl| \sum_{x\in k}\frtr{\sheaf{F}}{k}{x}\varphi(x) \Bigr|\leq
100\cond(\sheaf{F})^6|k|^{1/2}
$$
if $\cond(\sheaf{F})>\frac{4}{10}|k|^{1/10}$. In particular, if we
apply the last lemma with $\gamma=\frac{1}{10}$, any fixed $N\geq 1$,
and the corresponding constant $\alpha\geq 1$, we deduce that
$$
\proba\Bigl( \Bigl| \sum_{x\in k}\frtr{\sheaf{F}}{k}{x} \varphi(x)
\Bigr|\geq \alpha' \cond(\sheaf{F})^6\sqrt{|k|\log |k|}, \text{ for
  \emph{some} $\sheaf{F}$ in $\mext(k$)} \Bigr)\ll |k|^{-N}
$$
where $\alpha'=\max(\alpha,100)$. Then taking into
account~(\ref{eq-concentr-2}), we see that if we take
$$
s=6,\quad\quad \gamma=1/2,\quad\quad A=\alpha\sqrt{\log |k|},
$$
then the probability that $\varphi$ does not satisfy the conditions of
Proposition~\ref{pr-lower} for these values is $\ll
|k|^{-N}$. Therefore, we obtain our result using the
upper-bound~(\ref{eq-trivial-norm}).
\end{proof}

\end{document}